\numberwithin{equation}{section}
\theoremstyle{plain}
\newtheorem{theorem}{Theorem}[section]
\newtheorem{corollary}[theorem]{Corollary}
\theoremstyle{definition}
\newtheorem{remark}{Remark}[section]
\newtheorem{example}[theorem]{Example}
\def\R{\mathbb{R}}
\DeclareMathOperator{\diag}{Diag}
\DeclareMathOperator*{\argmin}{argmin}
\def\bk{\boldsymbol{k}}
\def\cov{\text{Cov}}
\def\Bias{\text{Bias}}
\title{Extrinsic local regression on manifold-valued data}
\author{Lizhen Lin}
\email{lizhen.lin@austin.utexas.edu}
\address{Department of Statistics and Data Sciences, The University of Texas at Austin, Austin, TX.  }
\author{ Brian St. Thomas}
\email{brian.st.thomas@duke.edu}
\address{Department of Statistical Science, Duke University, Durham, NC}
\author{Hongtu Zhu}
\email{htzhu@email.unc.edu}
\address{UNC Gillings School of Global Public Health\\
 The University of North Carolina at Chapel Hill, Chapel Hill, NC}
\author{ David B. Dunson }
\email{dunson@duke.edu}
\address{Department of Statistical Science, Duke University, Durham, NC}
\date{}
\begin{document}
\maketitle

\begin{abstract}

We propose an extrinsic regression framework for modeling data with  manifold valued responses and Euclidean predictors. Regression with manifold responses has wide applications in shape analysis,  neuroscience, medical imaging and  many other areas. Our approach embeds the manifold where the responses lie onto a higher dimensional Euclidean space, obtains a local regression estimate in that space, and then projects this estimate back onto the image of the manifold.  Outside the regression setting both intrinsic and extrinsic approaches have been proposed for modeling i.i.d  manifold-valued data.  However, to our knowledge our work is the first to take an extrinsic approach to the regression problem.    The proposed extrinsic regression framework is general, computationally efficient and  theoretically appealing.  Asymptotic distributions and  convergence rates of the extrinsic regression estimates are derived and a large class of examples are considered indicating the wide applicability of our approach.

\textbf{Keywords}: Convergence rate;  Differentiable manifold;  Geometry; Local regression; Object data;  Shape statistics.

\end{abstract}


\section{Introduction}

Although the main focus in statistics has been on data belonging to Euclidean spaces, it is common for data to have support on non-Euclidean geometric spaces.  Perhaps the simplest example is to directional data, which lie on {\em circles or spheres}.  Directional statistics dates back to R.A. Fisher's seminal paper \citep{fisherra53} on analyzing the directions of the earth's magnetic poles, with key later developments by  \cite{watson83}, \cite{maridajpuu}, \cite{fisher87} among others.  Technological advances in science and engineering have led to the routine collection of more complex geometric data.  For example, diffusion tensor imaging (DTI) obtains local information on the directions of neural activity through $3 \times 3$ {\em positive definite matrices} at each voxel \citep{dti-ref}. In machine vision, a digital image can be represented by a set of $k$-landmarks,  the collection of which form  \emph{landmark based shape spaces} \citep{kendall84}.   In engineering and machine learning, images are often  preprocessed or reduced to a collection of \emph{subspaces}, with each data point (an image) in the sample data represented by a subspace. One may also encounter data that are stored as \emph{orthonormal frames} \citep{vecdata}, \emph{surfaces}, \emph{curves}, and \emph{networks}. 

Statistical analysis  of  data sets whose basic elements are geometric objects requires a precise mathematical characterization of the underlying space and inference is dependent on the geometry of the space. In many cases (e.g., space of positive definite matrices, spheres, shape spaces, etc), the underlying space corresponds to a \emph{manifold}.  Manifolds are general topological spaces equipped with a differentiable/smooth structure which induces a geometry that does not in general adhere to the usual Euclidean geometry.  Therefore, new statistical theory and models have to be developed for statistical inference of manifold-valued data. There have been some developments on inferences based on i.i.d (independent and identically distributed) observations on a known manifold.  Such approaches are mainly based on obtaining statistical estimators for appropriate notions of location and spread on the manifold. For example, one could base inference on the center of a distribution on the Fr\'echet mean, with the asymptotic distribution of sample estimates obtained \citep{rabi03, rabivic05,  linclt}.  There has also been some consideration of nonparametric density estimation on manifolds \citep{Bhattacharya01122010, 2013arXivStiefel, Pelletier2005297}.  \cite{rabibook} provides a recent overview of such developments.

There has also been a growing interest in modeling the relationship between a manifold-valued response $Y$ and Euclidean predictors $X$. For example, many studies are devoted to investigating how brain shape changes with age, demographic factors, IQ and other variables.  It is essential to take into  account the underlying geometry of the manifold for proper inference.  Approaches that ignore the geometry of the data can potentially  lead to highly misleading predictions and inferences.   Some geometric approaches  have been developed in the literature. For example,  \cite{fletcher11}  develops a geodesic regression model on Riemannian manifolds, which can be viewed as a counterpart of linear regression on manifolds, and  subsequent work of \cite{polyregression} generalizes polynomial regression model to the manifold.  These parametric and semi-parametric models are elegant, but may lack sufficient flexibility in certain applications.
\cite{zhuetal09} proposes a semi-parametric intrinsic regression model on manifolds, and \cite{fletcher4408977} generalizes an intrinsic kernel regression method on the Riemannian manifold,  considering applications in modeling changes in brain shape over time. \cite{yuanetal12} develops an intrinsic local polynomial model on the space of symmetric positive definite matrices, which has applications in diffusion tensor imaging. A drawback of intrinsic models is the heavy computational burden incurred by minimizing a complex objective function along geodesics, typically requiring evaluation of an expensive gradient in an iterated algorithm.  The objective functions often have multiple modes, leading to large sensitivity to start points.  Further, existence and uniqueness of the population regression function  holds  only under  relatively restrictive conditions. Therefore, usual descent algorithms used in estimation are not guaranteed to converge to a  global optima. 

 With the motivation of  developing general purpose computationally efficient, theoretically sound and practically useful regression modeling frameworks for manifold-valued response data, we propose a nonparametric extrinsic regression model by first embedding the manifold where the response resides onto some higher-dimensional Euclidean spaces.  We use equivariant embeddings, which preserve a great deal of geometry for the images.  A local regression estimate (such as a local polynomial estimate) of the regression function is obtained after embedding, which is then projected back onto the image of the manifold. Outside the regression setting,  both intrinsic and extrinsic approaches have been proposed for modeling of manifold-valued data and for mathematically studying the properties of manifolds.  However, to our knowledge,  our work is the first in taking an extrinsic approach in the regression modeling context.  Our approach is general,  has elegant asymptotic theory  and outperforms intrinsic models in terms of computation efficiency.  In addition, there is essentially no difference in inference with the examples considered.

 The  article is organized as follows. Section \ref{sec-model} introduces the extrinsic regression model.  In Section \ref{sec-simu}, we explore the full utilities of our method through applications to three examples in which  the response resides on different manifolds.  A  simulation study is carried out for data on the sphere (example \ref{ex-sphere}) applying both intrinsic and extrinsic models.  The results indicate the overall superiority of our extrinsic method in terms of computational complexity and time compared to that of  intrinsic methods.   The extrinsic models are also applied to planar shape manifolds in example \ref{ex-planar}, with an application considered to modeling the brain shape of the Corpus Callosum from an ADHD (Attention Deficit/Hyperactivity Disorder) study. In example \ref{ex-stiefel}, our method is applied to  data on the Grassmannian considering both simulated and real data. Section \ref{sec-th} is devoted to studying  the asymptotic properties of our estimators in terms of asymptotic distribution and convergence rate. 

\section{Extrinsic local regression on  manifolds}
\label{sec-model}

Let $Y\in M$ be the response  variable  in a regression model where $(M,\rho)$ is a general metric space with distance metric $\rho$. Let $X\in \R^m$ be the covariate or predictor variable. Given  data $(x_i, y_i)$ ($i=1,\ldots, m$), the goal is to model a regression relationship between $Y$ and $X$.   The typical regression framework  with  $y_i=F(x_i)+\epsilon_i$ is not appropriate here  as  expressions like $y_i-F(x_i)$ are not well-defined due to the fact that the space $M$ (e.g., a manifold) where the response variable lies  is in general not a vector space. Let $P(x,y)$ be the joint distribution of $(X, Y)$  and $P(x)$ be the marginal distribution of $X$ with marginal density $f_X(x)$. Denote $P(y|x)$ as the conditional distribution of $Y$ given $X$ with conditional density $p(y|x)$. One can define the {population regression  function or map}  $F(x)$ (if it exists) as
\begin{align}
\label{eq-model}
F(x)=\argmin_{q\in M}\int_{M}\rho^2(q,y)P(dy|x),
\end{align}
where $\rho$ is the distance metric on $M$.

Let $M$ be a $d$-dimensional differentiable or smooth manifold.  A manifold $M$ is a topological space that locally behaves like a  Euclidean space. In order to equip $M$ with a metric space structure, one can  employ a Riemannian structure, with $\rho$ taken to be the geodesic distance,  which defines an \emph{intrinsic regression function.} Alternatively, one can embed the manifold onto some higher dimensional Euclidean space via an embedding  map $J$ and  use the Euclidean distance  $\|\cdot\|$ instead. The latter model is referred to as an \emph{extrinsic regression model}. One of the potential hurdles for carrying out intrinsic analysis is that uniqueness of the population regression function in \eqref{eq-model} (with $\rho$ taken to be the geodesic distance) can be hard to verify.  \cite{Le08042014} establish several interesting and deep results for the regression framework and provide broader conditions for verifying the uniqueness of the population regression function. Intrinsic models can be  computationally expensive, since minimizing their complex objective functions typically require a gradient descent type algorithm.  In general, this requires fine tuning at each step, which results in an excessive computational burden.  Further, these  gradient descent  algorithms are  not  always guaranteed to converge  to a global minimum or only converge under very restrictive conditions.  In contrast,  the uniqueness of the population regression holds under very general conditions for extrinsic models. Extrinsic models  are extremely easy to evaluate and are  orders of magnitude faster than intrinsic models.

Let $J: M\rightarrow E^D$ be an embedding of $M$ onto some higher  dimensional ($D\geq d$) Euclidean space $E^D$ and denote the image of the embedding as $\widetilde{M}=J(M)$.  By the definition of embedding,  the differential of $J$  is a map between the tangent space of $M$ at $q$ and the tangent space of $E^D$ at $J(q)$; that is, $d_qJ: T_qM\rightarrow T_{J(q)}E^D$  is an injective map and $J$ is a homeomorphism of $M$ onto its image $\widetilde{M}$. Here $T_qM$ is the tangent space of $M$ at $q$ and $ T_{J(q)}E^D$ is the tangent space of $E^D$ at $J(q)$.   Let $||\cdot||$ be the Euclidean norm.
In an extrinsic model, the true extrinsic regression function is defined as
\begin{align}
F(x)&\nonumber=\argmin_{q\in M}\int_{M}||J(q)-J(y)||^2 P(dy|x)\\
    &=   \argmin_{q\in M}\int_{\widetilde M}||J(q)-z||^2  \widetilde{P}(dz|x)
\end{align}
where $\widetilde{P}(\cdot\mid x)=P(\cdot\mid x)\circ J^{-1}$ is the conditional probability measure on $J(M)$ given $x$ induced by the conditional probability measure  $P(\cdot\mid x)$ via  the embedding $J$.

We now proceed to propose an estimator for $F(x)$. Let $K:\mathbb R^m \rightarrow \mathbb R$ be a multivariate  kernel function such that $\int_{\mathbb R^m} K(x)dx=1$ and   $\int_{\mathbb R^m}x K(x)dx=0$. One can take $K$ to be a product of $m$ one-dimensional kernel functions for example. Let $H=\diag(h_1,\ldots, h_m)$  with $h_i>0$ ($i=1,\ldots, m$)  be the bandwidth vector and  $|H|=h_1\ldots h_m$. Let $K_H(x)=\frac{1}{|H|}K(H^{-1}x)$ and
\begin{equation}
\label{eq-kesti}
\widehat{F}(x)=\argmin_{y\in E^D}\sum_{i=1}^n \dfrac{K_H(x_i-x)||y-J(y_i)||^2}{\sum_{i=1}^nK_H(x_i-x)}=\sum_{i=1}^n\dfrac{J(y_i)K_H(x_i-x)}{\sum_{i=1}^nK_H(x_i-x)},
\end{equation}
which is basically a weighted average of points $J(y_1),\ldots, J(y_n)$.
We are now ready to define the \emph{extrinsic kernel estimate of the regression function} $F(x)$ as
\begin{equation}
\label{eq-extrinsic}
\widehat{F}_E(x)=J^{-1}\left(\mathcal P(\widehat{F}(x))\right)=J^{-1}\left(\argmin_{q\in \widetilde{M}}||q-\widehat{F}(x)||\right),
\end{equation}
where $\mathcal P$ denotes the projection map onto the image $\widetilde{M}$.
Basically, our estimation procedure consists of two steps. In step one, it calculates a local regression estimate on the Euclidean space after embedding. In step two,  the estimate obtained in step one is projected back onto the image of the manifold.

\begin{remark}
The embedding $J$ used in the extrinsic regression model is in general not unique.  It is desirable to have  an embedding that preserves as much geometry as possible.  An \emph{equivariant embedding}   preserves a substantial amount of geometry.   Let $G$ be some large Lie group acting on $M$. We say that $J$ is an equivariant embedding if we can find a group homomorphism  $\phi: G\rightarrow GL(D, \mathbb R)$ from $G$ to the general linear group $GL(D, \mathbb R)$ of degree $D$  such that
\begin{align*}
J(gq)=\phi(g)J(q)
\end{align*}
for any $g\in G$ and $q\in M$. The intuition behind equivariant embedding  is that the image of  $M$ under  the group action of the Lie group $G$  is preserved by the group action of $\phi(G)$ on the image, thus preserving many  geometric features. Note that the choice of embedding is not unique and in some cases constructing an equivariant embedding can be a non-trivial task, but in most of the cases a natural embedding would arise and such embeddings can often be verified as equivariant.
\end{remark}

\begin{remark}
Alternatively,  we can obtain  some robust estimator under our proposed framework. The regression estimate is taken as the projection of the following estimator onto the image $\widetilde{M}$ of $M$ after an embedding $J$. We can call it the \emph{extrinsic median regression model}. Specifically, we define

\begin{align}
\label{eq-kestirobust}
\widehat{F}(x)=\argmin_{y\in E^D}\sum_{i=1}^n \dfrac{K_H(x_i-x)||y-J(y_i)||}{\sum_{i=1}^nK_H(x_i-x)}\; \;\text{and}\; \;\widehat{F}_E(x)=J^{-1}\left(\argmin_{q\in \widetilde{M}}||q-\widehat{F}(x)||\right).
\end{align}
One can use the Weizfield formula \citep{wes37} in calculating the weighted median of \eqref{eq-kestirobust} (if it exists).  Such estimates can be shown to be robust to outliers and contaminations.
\end{remark}

\begin{remark}
\label{re-poly}
A kernel estimate is obtained first in \eqref{eq-kesti} before projection. However,  the framework can be easily generalized using higher order local polynomial regression estimates (of degree $p$)\citep{fan1996local}. For example, one can have a \emph{local linear estimator}  \citep{fan1993} for $\widehat{F}(x)$ before projection. That is,  for any $x$, let
\begin{align}
\label{eq-localinear}
(\hat{\boldsymbol\beta}_0,\hat{\boldsymbol\beta}_1)&=\argmin_{\boldsymbol\beta_0,\boldsymbol\beta_1 }\sum_{i=1}^n \left\|J(y_i)-\boldsymbol\beta_0-\boldsymbol\beta_1^t(x_i-x)\right\|^2K_H(x_i-x).
\end{align}
Then, we have
\begin{align}
\widehat{F}(x)&=\hat{\boldsymbol\beta}_0(x),\\
\widehat{F}_E(x)&=J^{-1}\left(\mathcal P(\widehat{F}(x))\right)=J^{-1}\left(\argmin_{q\in \widetilde{M}}||q-\widehat{F}(x)||\right) \label{eq-polyesti}.
\end{align}
The  properties of the estimator $\widehat{F}_E(x)$ where  $\widehat{F}(x)$ is given by the general $p$th local polynomial estimator of $J(y_1),\ldots, J(y_n)$ are explored in Theorem \ref{th-poly}.
\end{remark}

Note that our work addresses different problems  from that of \cite{mingyenjasa}, which provides an elegant framework for high dimensional data analysis and manifold learning  by first performing  local linear regression  on a tangent plane estimate of a lower-dimensional manifold  where   the high-dimensional data concentrate.

\section{Examples and applications}
\label{sec-simu}
The proposed extrinsic regression framework is  very general and has appealing asymptotic properties as  will be shown in Section 4.   To illustrate the wide applicability of our approach and validate its finite sample performance, we carry out  a study by  applying our method to various  examples with the response taking values in many well-known manifolds.  For each of the examples considered, we provide details on the embeddings, verify such embeddings are equivariant, and give explicit expressions for the projections to obtain the final estimate in each case.
In example \ref{ex-sphere}, we simulate data from a 2-dimensional sphere  and compare the estimates from our extrinsic regression model with that of an intrinsic model. The result indicates that  the extrinsic models clearly outperform  the intrinsic models by orders of magnitude  in terms of computational complexity and time.
In example \ref{ex-planar}, we study a data example with response from a planar shape, in which  the brain shape of the subjects are represented by landmarks on the boundary. Example \ref{ex-stiefel} provides  details  of the estimator when the responses take values on a Stiefel or Grassmann manifold. The method is illustrated with a synthetic data set and small financial time series data set, both of which have subspace responses of possibly mixed dimension and covariates,  which are the corresponding time points.

\begin{example}
\label{ex-sphere}
Statistical analysis on i.i.d data from  the 2-dimensional sphere $S^2$,  often called  \emph{directional statistics},  has a long history \citep{fisherra53, watson83, maridajpuu, fisher87}. Recently,   \cite{wang_lerman2015} applied a nonparametric Bayesian approach to an example with response on the circle $S^1$.  In this example, we  work out the details in an extrinsic regression model  with the responses lying on a $d$-dimensional sphere $S^d$.  The model is illustrated with data $\{(x_i, y_i)$, $i=1,\ldots, n$\},  where  $y_i\in S^2$.

Note that  $S^d$ is a submanifold of $\R^{d+1}$;  therefore, the  inclusion map $\imath $ serves as a natural embedding onto $\mathbb R^{d+1}$.  It is easy to check that the embedding is equivariant  with the Lie group $G=SO(d + 1)$, the special orthogonal group of $(d+1)$ by $(d+1)$ matrices $A$ with $AA^T=1$ and $|A|=1$. Take the homomorphism map from $G$ to $GL(d+1, \mathbb R)$ to be the identity map. Then it is easy to see that $J(gp)=gp=\phi(g)J(p)$, where $g\in G$ and $p\in S^d$.

Given $J(y_1),\ldots, J(y_n)$, one first obtains  $\widehat{F}(x)$ as given in \eqref{eq-kesti}. Its projection onto the image $\widetilde{M}$ is given by
\begin{equation}
\widehat{F}_E(x)=\widehat{F}(x)/||\widehat{F}(x)||,\;\text{when}\; \widehat{F}(x)\neq 0.
\end{equation}

There are many well defined parametric distributions on the sphere.   A common and useful distribution is the von Mises-Fisher distribution  \citep{fisherra53} on the unit sphere, which has the following density with respect to the normalized volume measure on the sphere:
$$p_{MF} (y;\mu,\kappa) \propto \exp(\kappa \mu^T y),$$
where $\kappa$ is a concentration parameter with $\mu$  a location parameter and  $E(y) = \mu$ holds.  We simulate the data from  the unit sphere by letting  the mean function be covariate-dependent. That is, let
$$\mu =  \frac{\beta \circ x}{\lvert \beta \circ x \rvert},$$ where $\beta \circ x$ is the Hadamard product $(\beta_1x^1,\ldots, \beta_mx^m)$.

For this example,  we will use data generated by the following model
\begin{align}
\label{eq-data}
\beta \sim& N_3(0,I),\; x_i^1 \sim N(0,1),\; x_i^2 \sim N(0,1),\; x_i^3 = x_i^1*x_i^2,\\ \nonumber
y_i \sim& MF\left(\mu_i, \kappa\right), \; \mu_i = \frac{\beta \circ x_i}{\lvert \beta \circ x_i \rvert},\;i = 1, \ldots, n,\\ \nonumber
\kappa\; & \text{ some fixed known value}.
\end{align}

As an example of what the data looks like, we generate one thousand $(n = 1000)$ observations from the above model with $\kappa = 10$ so that realizations are near their expected value. Figure \ref{fig:sphere2}  shows this example in which 100 predictions from the extrinsic model are plotted against their true values using 900 training points.  To select the bandwidth $h$ we use 10-fold cross-validation with $h$ ranging  from $[.1,.2, \ldots, 1.9, 2]$ and choose the value that gives minimum average mean square error.  Residuals for the mean square error are measured using the intrinsic distance, or great circle distance, on the sphere.

\begin{figure}[ht]
\begin{center}
\includegraphics[scale=.6]{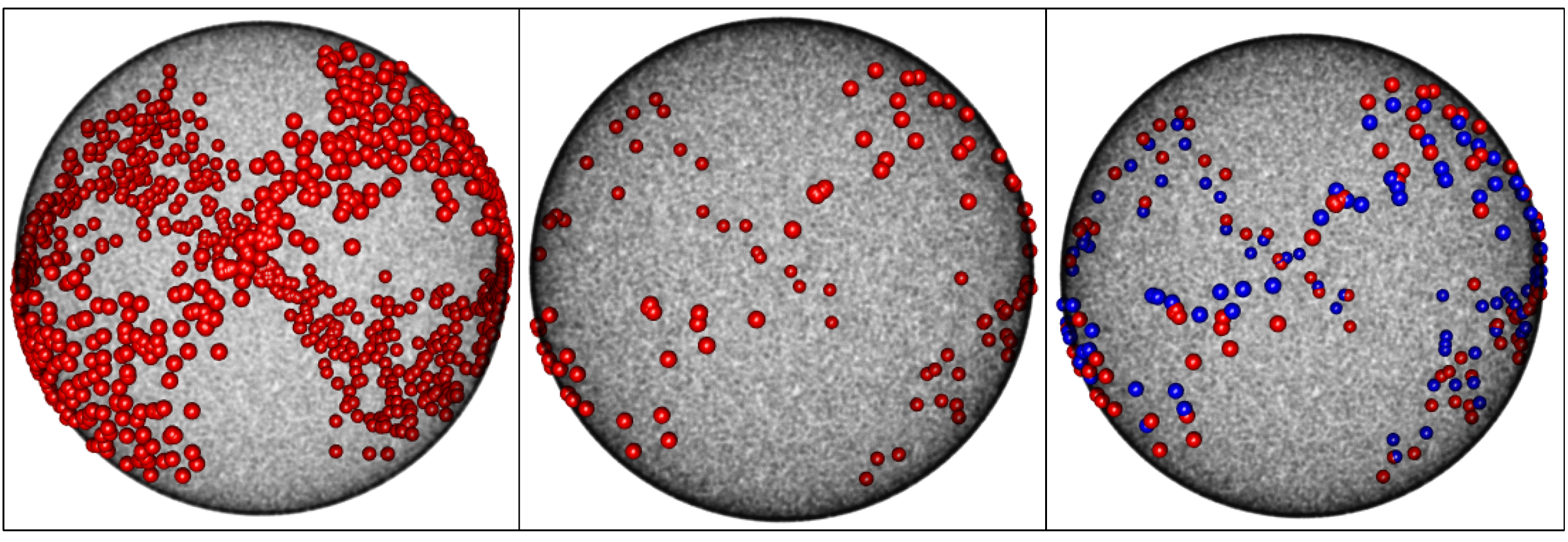}
\end{center}
\vspace{-.1in}
 \caption{\textbf{Left} The training values on the sphere. \textbf{Middle} The held out values to be predicted through extrinsic regression. \textbf{Right} The extrinsic predictions (blue) plotted against the true values (red).}
 \label{fig:sphere2}
 \end{figure}

To illustrate the utility and advantages of extrinsic  regression models, we compare our method to an intrinsic kernel regression model that uses intrinsic distance of the sphere to minimize the objective function. Computations on the sphere are  in general not  as  intensive compared to more complicated manifolds such as shape spaces,  etc, but it still requires an iterative algorithm, such as gradient descent, for  the intrinsic model in order to obtain a kernel regression estimate. The following simulation results  demonstrate extrinsic kernel regression gives at least as accurate estimates as intrinsic kernel regression but in much less computation time even for $S^2$.

\textbf{Comparison with an intrinsic kernel regression model:}  The intrinsic kernel regression estimate minimizes the objective function $f(y) = \sum_{i = 1}^n w_i d^2(y,y_i)$, where $y$ and $y_i$ are points on the sphere $S^2$, $w_i$ are determined by the Gaussian kernel function, and $d(\cdot,\cdot)$ in this case is the greater circle distance. Then the gradient of $f$ on the sphere is given by
\begin{align*}
\nabla f(y) = \sum_{i = 1}^n w_i2d(y,y_i)\frac{\log_y(y_i)}{d(y,y_i)}= \sum_{i = 1} 2 w_i \frac{\operatorname{arccos}(y^Ty_i)}{\sqrt{1 - (y^Ty_i)^2}}(y_i - (y^Ty_i)y),
\end{align*}
where $\log_y(y_i)$ is the $\log$ map or the inverse exponential map on the sphere.
Estimates for $y$ can be obtained through a gradient descent algorithm with step size $\delta$ and error threshold $\epsilon$. We applied the intrinsic and extrinsic models to the same set of data using the Gaussian kernel function. 

Twenty different data sets of 2000 observations were generated from the above sphere regression model with von-Mises Fisher concentration parameter $\kappa = \{1, 2, \ldots, 20\}$. Of the 2000 observations, 50 were used to check the accuracy of the extrinsic and intrinsic estimates. To see the effect of training sample size on the quality of the estimates, the estimates were also made on subsets of the 1950 training observations, starting with 2 observations and increasing to all 1950 observations. The same training observations were always used for both models. In both models, the bandwidth was chosen through 10-fold cross validation. The intrinsic kernel regression was fit with step size $\delta = .01$ and error threshold $\epsilon = .001$. The performance of the two models are compared in terms of MSE and predictive MSE. The MSE is calculated using the greater circle distance between predicted values and the true expected value, while predictive MSE is calculated using the greater circle distance between the predicted values and the realized values. The performance results using 50 hold out observations can be seen in Figure ~\ref{fig:mse_comp}. 

\begin{figure}[h]
\begin{center}
\includegraphics[scale=.5]{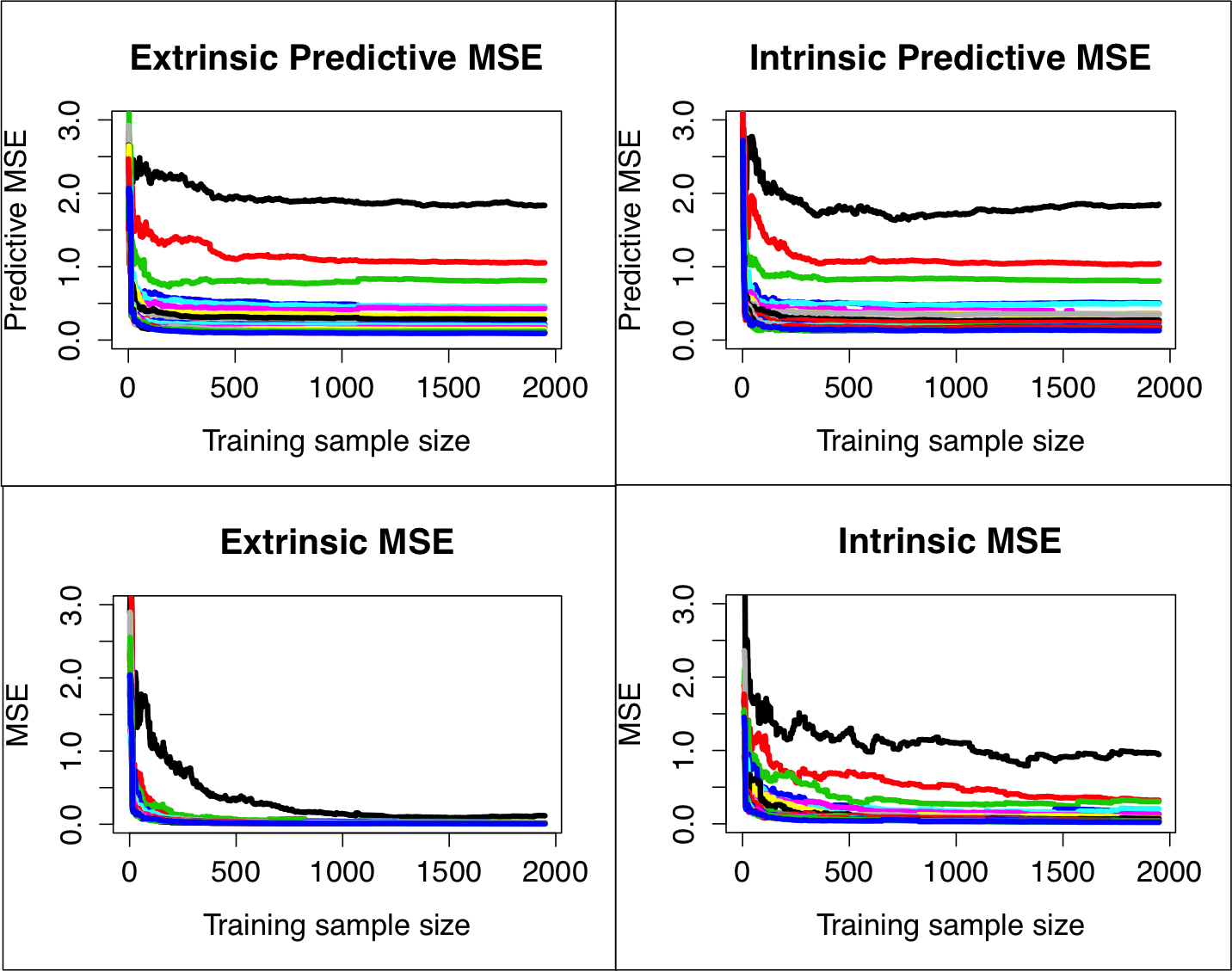}
\caption{The performance of extrinsic and intrinsic regression models on 50 test observations from sphere regression models with concentration parameters from 1 to 20. Each color corresponds to a concentration parameter. The extrinsic and intrinsic models have similar performance in predictive MSE with low concentration parameters. However in terms of MSE, the extrinsic model appears to perform better with lower sample sizes even with lower concentration parameters.}
\label{fig:mse_comp}
\end{center}
\end{figure}
\end{example}

Predictive MSE does not converge to 0 because the generating distribution has a high variance; however, as the concentration increases,  the predictive MSE does approach 0. The extrinsic and intrinsic kernel regressions perform similarly with large sample sizes. The extrinsic kernel regression drops in predictive MSE faster than the intrinsic model, which may stem from only having the kernel bandwidth as a tuning parameter which can be selected more easily than choosing the bandwidth, step-size, and error thresholds even through cross-validation.

A significant advantage of the extrinsic kernel regression is the speed of computation. Both methods were implemented in C++ using Rcpp \citep{refBrianadd}, and resulted in up to a 60$\times$ improvement in speed in making a single prediction using all of the training observations. For speed comparisons, a single prediction was made given the same number of test observations, and the time to produce the estimate was recorded. Each of these trials was done five times, and we compare the mean time to producing the estimate in Figure ~\ref{fig:speed}.

\begin{figure}[ht]
\begin{center}
\includegraphics[scale=.55]{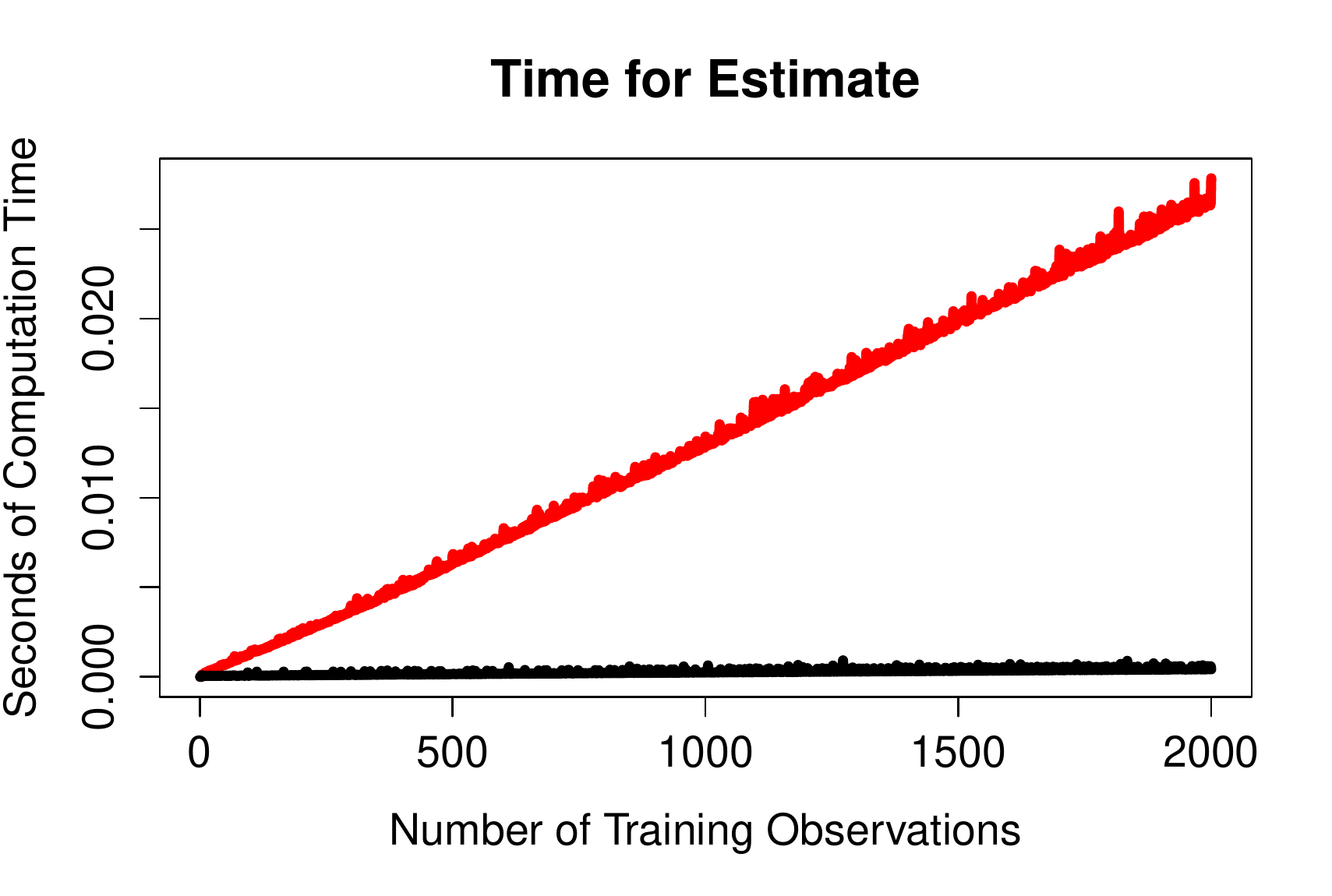}
\includegraphics[scale=.55]{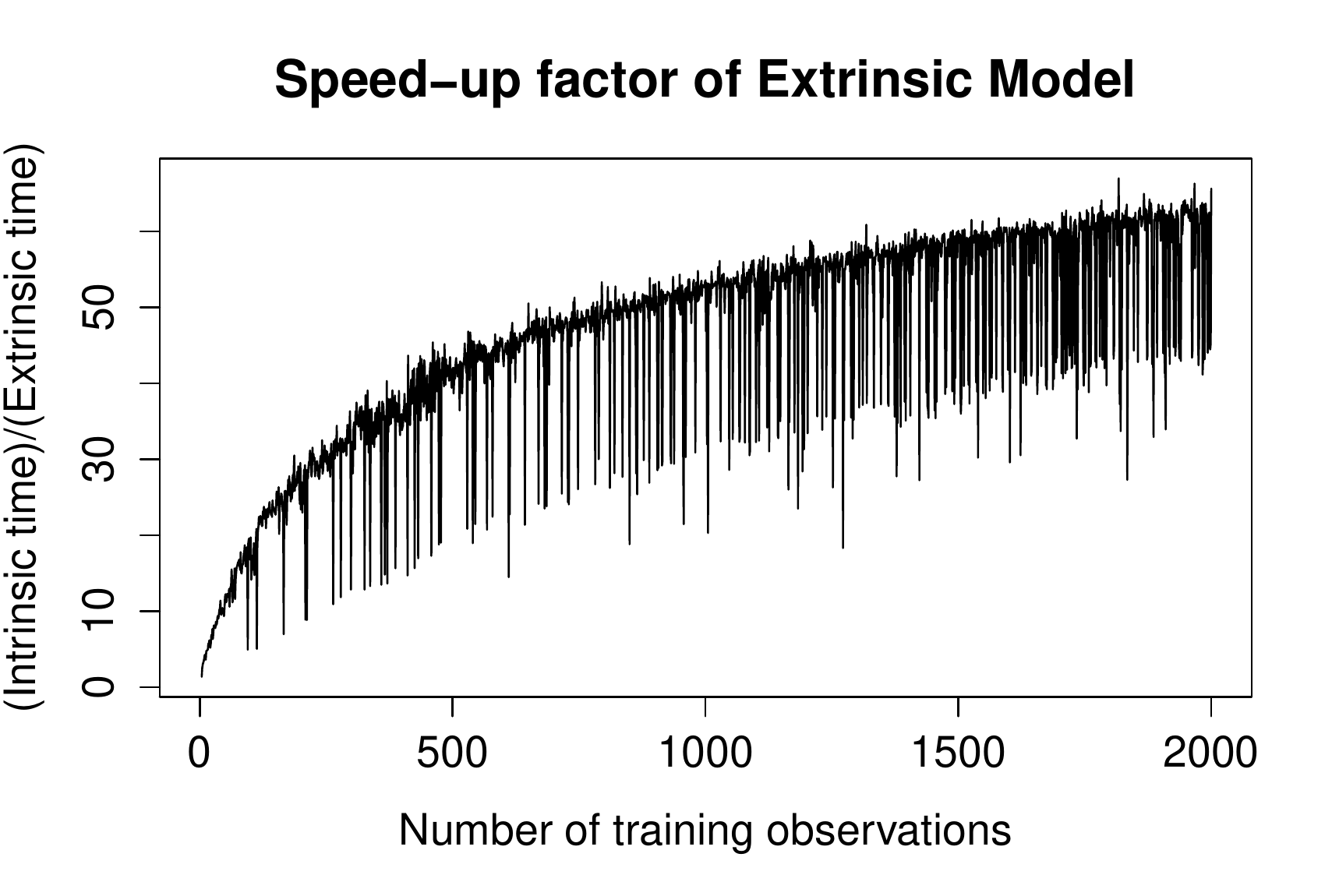}
\end{center}
\caption{Speed comparisons between the extrinsic and intrinsic kernel regressions as a function of the number of training observations. The average seconds to produce an estimate for a single test observation are plotted in red for the intrinsic model, and black for the extrinsic model. The multiple between the speed for the intrinsic and extrinsic estimates plotted are also plotted for reference.}
\label{fig:speed}
\end{figure}

Note that the same kernel weights are computed in both algorithms, so the difference is attributable to the gradient descent versus extrinsic optimization procedures. Since the speed comparisons were done for computing a single prediction and the difference is due almost entirely to the gradient descent steps, making multiple predictions results in an even more favorable comparison for the extrinsic model. This experiment shows that the extrinsic kernel regression applied to sphere data performs at least as well on prediction and can be computed significantly faster.

\begin{example}
\label{ex-planar}
We now consider an example with planar shape responses. Planar shapes are one of the most important classes of landmark based shapes spaces. Such spaces were defined by \cite{kendall77} and \cite{kendall84} with pioneering work by \cite{books1} motivated from applications on biological shapes. We now describe the geometry of the space  which will be used in  obtaining regression estimates for our model.  Let $z=(z_1,\ldots, z_k)$ with $z_1,\ldots, z_k\in \R^2$ be a set of $k$ landmarks.  Let $<z>=(\bar{z},\ldots, \bar{z})$ where $\bar{z}=\sum_{i=1}^k  z_i/k.$ Denote  $u=\dfrac{z-<z>}{||z-<z>||}$ which  can be viewed as an element on  the sphere $S^{2k-3}$,  which is called the pre-shape.
   The planar shape $\Sigma_2^k$ can now be represented as the quotient of the pre-shape under the group action by $SO(2)$, the 2 by 2 special orthogonal group. That is,
   $\Sigma_2^k=S^{2k-2-1}/SO(2)$.  $\Sigma_2^k$ can be shown to be  equivalent to the complex projective space $\mathbb C\mathbb P^{k-2}$.
Therefore, a point on the planar shape can be identified  as the orbit or equivalent of $z$  which we denote by $\sigma(z)$.
   Viewing $z$ as elements in the complex plane,  one can embed $\Sigma_2^k$ onto the $S(k,\mathbb C)$, the space of $k\times k$ complex Hermitian matrices via the Veronese-Whitney embedding (see \cite{rabivic05}, \cite{rabibook}):
\begin{equation}
\label{eq-planaremb}
J(\sigma(z))=uu^*=((u_i\bar{u}_j))_{1\leq, i,j\leq k}.
\end{equation}
One can verify the Veronese-Whitney  embedding is equivariant (see \cite{kendall84}) by taking the Lie group $G$ to be  special unitary group $SU(k)$ with
$$SU(k)=\{A\in GL(k, \mathbb C), AA^*=I, det(A)=I\}.$$
The action is on the left,
\begin{align*}
A\sigma(z)=\sigma(Az).
\end{align*}
The  homomorphism map $\phi$ is taken to be
$$\phi: S(k,\mathbb C)\rightarrow S(k, \mathbb C): \phi(A)\widetilde A=A\widetilde A A^*.$$
Therefore, one has
\begin{align*}
J(A\sigma(z))=Auu^*A^*=\phi(A)J(\sigma(z)).
\end{align*}

We now describe the projection after $\widehat{F}(x)$ is given  by \eqref{eq-kesti},  where $J(y_i)$ ($i=1,\ldots, n$) are obtained using the equivariant embedding given in \eqref{eq-planaremb}. Letting $v^T$ be the eigenvector corresponding to largest eigenvalue of $\widehat{F}(x)$, by a careful calculation, one can show that  the projection of  $\widehat {F}(x)$ is given by $$\mathcal P_{J(M)}\left(\widehat{F}(x)\right)=v^T\bar{v}.$$
 Therefore, the extrinsic kernel regression estimate is given by
\begin{equation}
\widehat{F}_E(x)=J^{-1}(v^T\bar{v}).
\end{equation}

\textbf{Corpus Callosum (CC) data set:} We study  ADHD-200 dataset \footnote{http://fcon\_1000.projects.nitrc.org/indi/adhd200/}  in which the shape contour of the brain
Corpus Callosum  are recorded for each subject along with variables such as gender, age, and ADHD diagnosis.   The subjects consist of patients who  are diagnosed with  ADHD.   50 landmarks were placed  outlining the CC shape for 647 patients  for the ADHD-200 dataset. The age of the patients range from 7 to 21 years old, with 404 typically developing children and 243 individuals diagnosed with some form of ADHD. The original data set differentiates between types of ADHD diagnoses, and we simplify the problem of choosing a kernel by using a binary response for an ADHD diagnosis.

According to the findings in \cite{hongtu15}, there is not a significant effect of gender on the area of different segments of the CC; however diagnosis and the interaction between diagnosis and age were found to be statistically significant ($ p <.01$). With knowledge of these results, we performed the extrinsic kernel regression method for the CC planar shape response using diagnosis, $x^1$, and age, $x^2$, for covariates. The choice of kernel between two sets of covariates $x_1 = (x^1_1, x^2_1)$ and $x_2 = (x^1_2, x^2_2)$ is
$$K_H(x_1,x_2) = \begin{cases} \exp\left(-\frac{(x_1^2 - x_2^2)^2}{h}\right)/h^2 &\mbox{if } x_1^1 \equiv x_2^1  \\
 0 & \mbox{if } x_1^1 \not \equiv x_2^1 .\\
 \end{cases}$$
Although \cite{hongtu15} explores  clustering the shape by specific diagnosis, we visualize how the CC shape develops over time by making predictions at different time points. We show predictions for ages 9, 12, 16, and 19 year old children of ADHD diagnosis or typical development. The results can be seen in Figure ~\ref{fig:cc8_12}.

\begin{figure}[ht]
\begin{center}
\includegraphics[scale=.55]{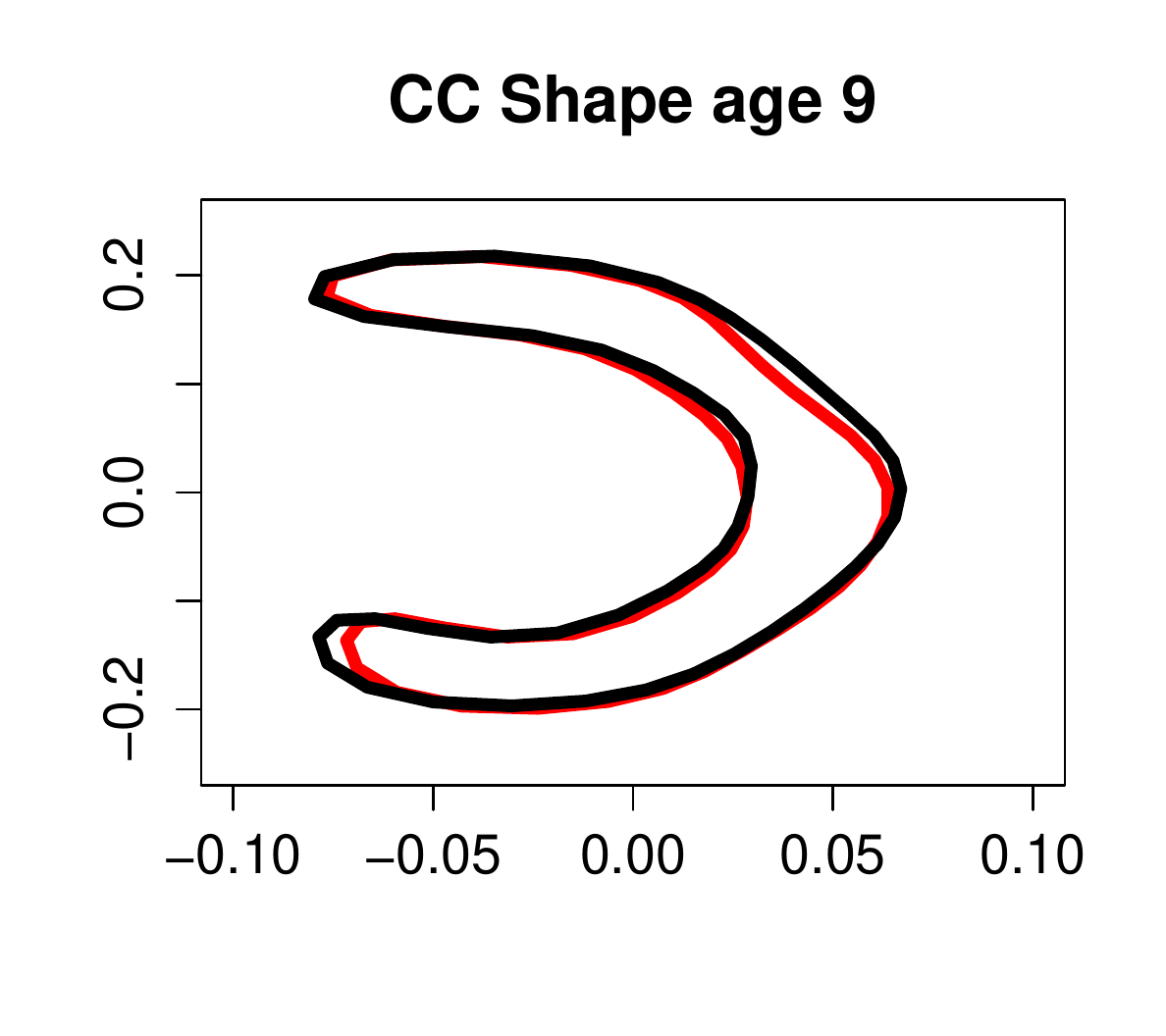}
\includegraphics[scale=.55]{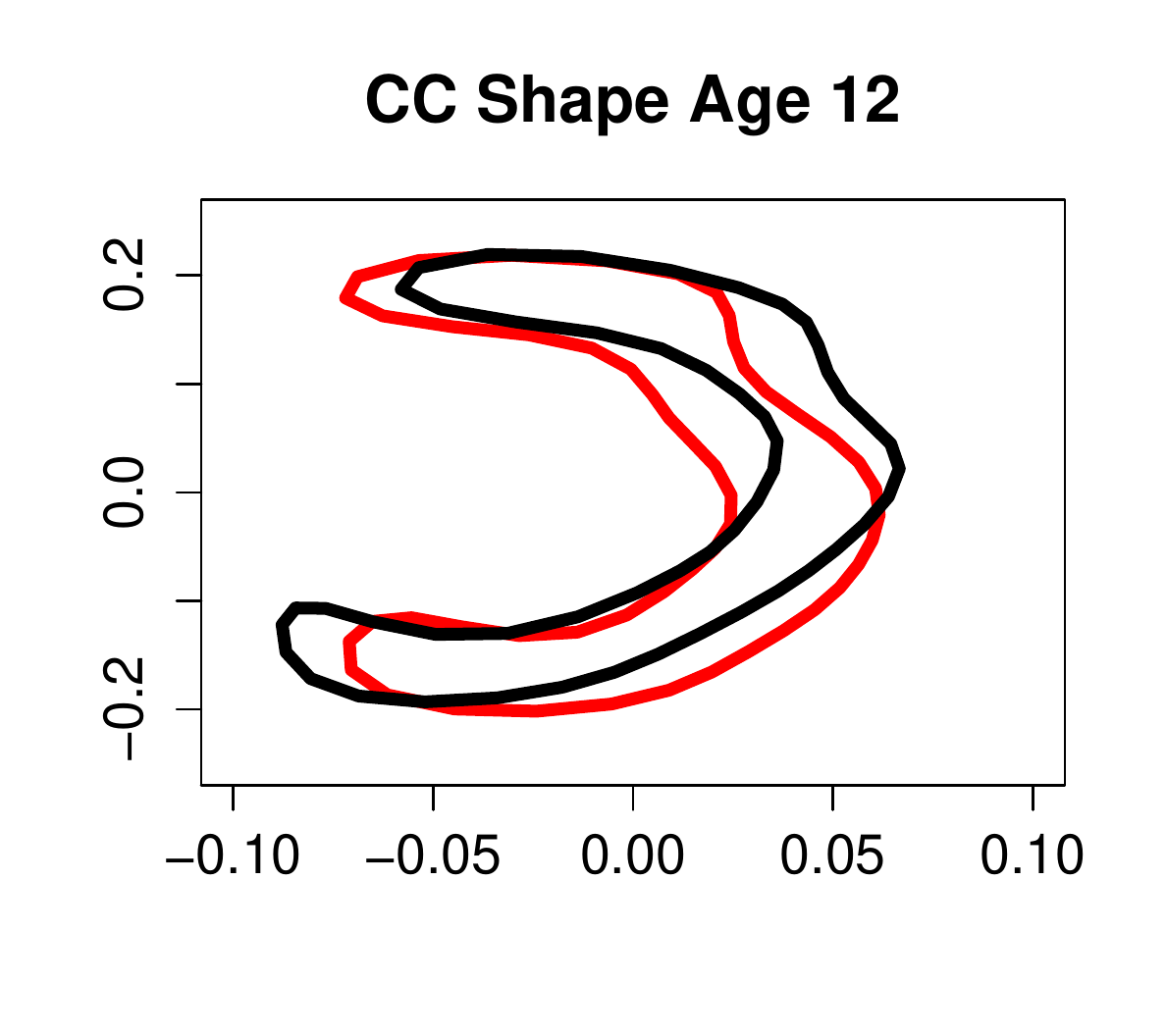}
\includegraphics[scale=.55]{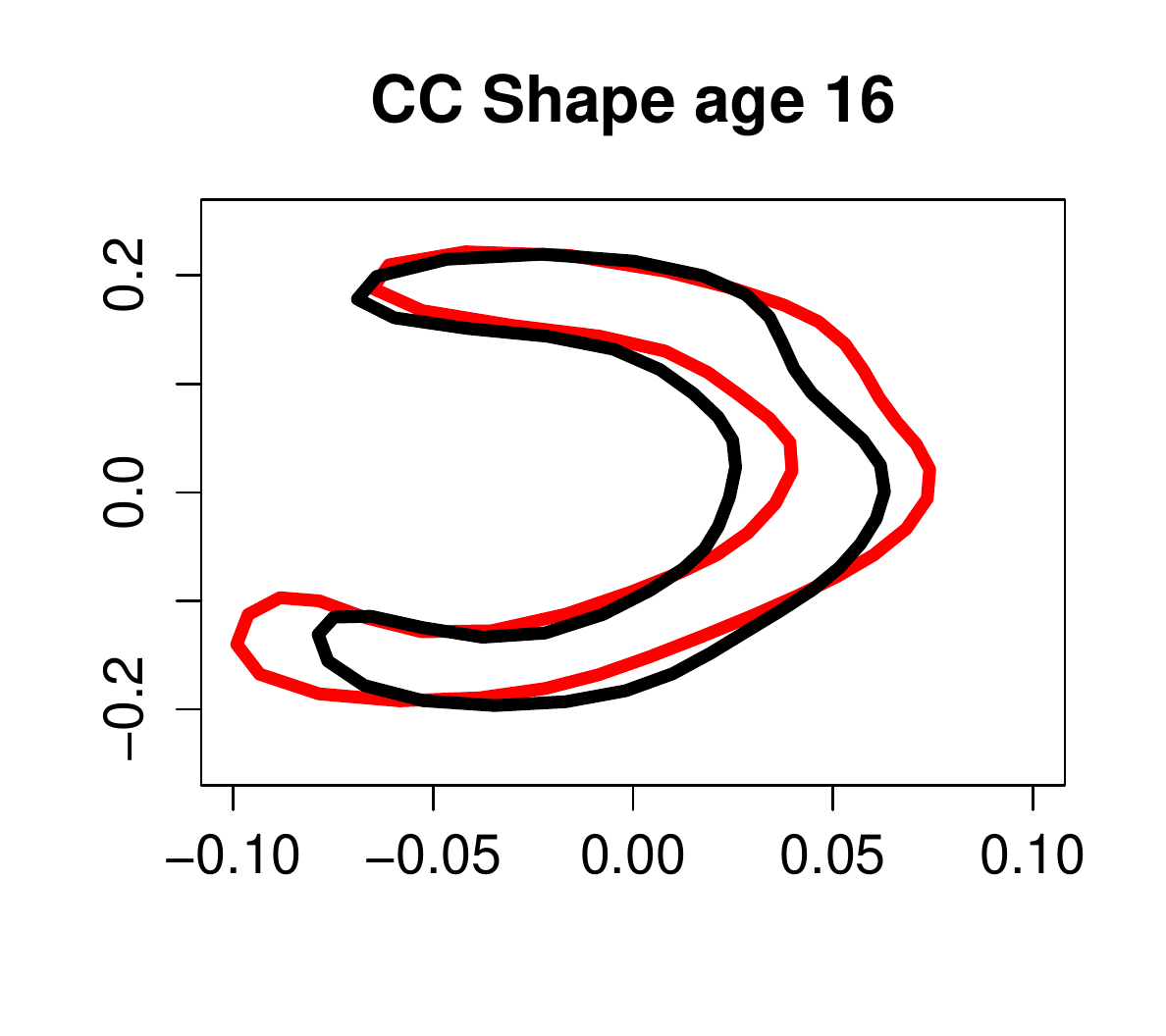}
\includegraphics[scale=.55]{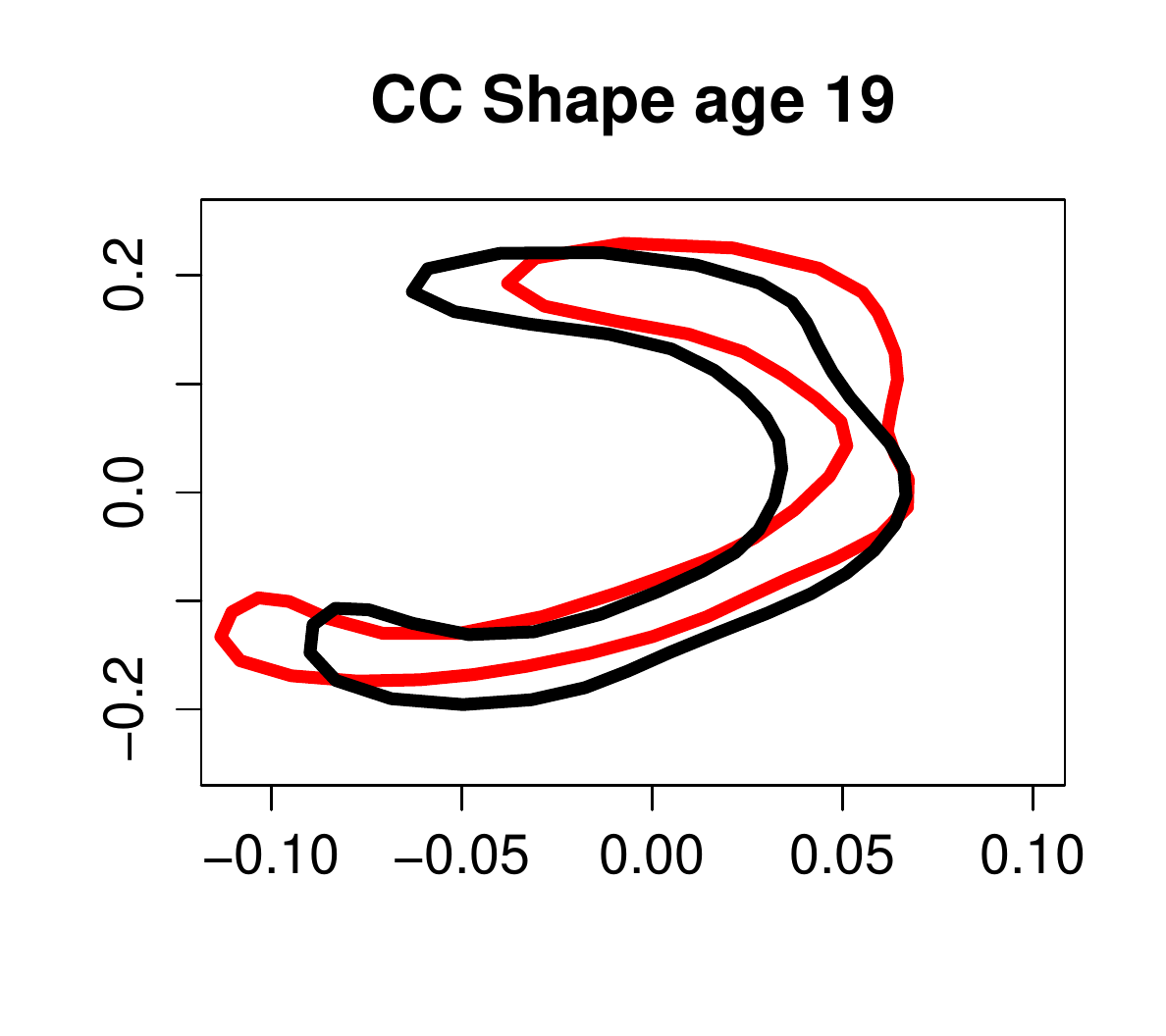}
\end{center}
\caption{Predicted CC shape for children ages 9, 12, 16, and 19. The black shape corresponds to typically developing children, while the red shape corresponds to children diagnosed with ADHD. Kernel regression allows us to visualize how CC shape changes through development. Here sections of CC appear smaller in ADHD diagnoses than in normal development.}
\label{fig:cc8_12}
\end{figure}

What we can observe from the two plots is that the CC shapes for the 8 year olds seem to be close, but by age 12 the shapes have diverged substantially, with shrinking of the CC being apparent in later years in development. This quality of the CC shapes between ADHD and normal development is consistent with results found in the literature \citep{hongtu15}.

In previous studies, ADHD diagnoses were clustered using the shape information to predict the diagnosis class, and the centroid of the cluster is the predicted shape for that class \citep{hongtu15}. Our method adds to this analysis by taking the diagnosis and predicting the CC shape as a function of age. Our method also has the benefit of evaluating quickly, making selection of the bandwidth for the kernel through cross-validation feasible.
\end{example}

\begin{example}
\label{ex-stiefel}
We now consider another two classes of important  manifolds,  Stiefel manifolds and  Grassman manifolds (Grassmannians).
The Stiefel manifold, $V_k(\mathbb R^m)$, is the collection of $k$ orthonormal frames in $\mathbb R^m$. That is,   the Stiefel manifold consists of  the set of ordered $k$-tuples of orthonormal vectors in $\R^m$, which can be represented as $\{X\in S(m,k), XX^T=I_m  \}$. The Stiefel manifold includes the $m$ dimensional sphere $S^m$ as a special case with $k$=1 and $O(m)$ the  orthogonal group when $k=m$.   Examples of data on the Stiefel manifold include  the orbit of the comets and the vector cardiogram.  Applications of Stiefel manifold are present in  earth sciences, medicine, astronomy, meteorology and biology.  The Stiefel  manifold is a compact manifold of dimension $km-k-k(k-1)/2$  and it is a submanifold of $\R^{km}$.  The inclusion map can be further shown to be an equivariant embedding with the Lie group taken to  the orthogonal group $O(m)$.

Given $\widehat{F}(x)$ obtained by kernel regression after embedding the points $y_1,\ldots, y_n$ on the Stiefel manifold to the Euclidean space $\R^{km}$, the next step is to obtain the projection of  $\widehat{F}(x)$  onto $\widetilde{M}=J(M).$  We first make an \emph{orthogonal decomposition} of  $\widehat{F}(x)$  by letting $\widehat{F}(x)=US$, where $U\in V_{k,m}$, which can be viewed as the orientation of  $\widehat{F}(x)$   and $S$ is positive semi-definite, which has the same rank as  $\widehat{F}(x)$.   Then the projection of  $\widehat{F}(x)$ (or projection set)  is given by
\begin{align*}
\mathcal P_{\tilde{M}}( \widehat{F}(x) )=\{U\in V_{k,m}:  \widehat{F}(x) =U( \widehat{F}(x)^T \widehat{F}(x) )^{1/2}  \}.
\end{align*}
See Theorem 10.2 in \cite{rabibook} for a proof of the results.  Then the projection is unique, that is, the above set is a singleton if and only  if $\widehat{F}(x)$  is of full rank.


The Grassmann manifold or the Grassmannian $Gr_k(\R^m)$ is  the space of all the subspaces of a fixed dimension $k$ whose basis elements are vectors in $\R^m$, which is closely related to the Stiefel manifold $V_{k,m}$. Recall a subspace can be viewed as the span of an orthonormal basis. Let $\boldsymbol{v}=\{v_1,\ldots, v_k\}$ be such an orthonormal basis for a subspace on the Grassmannian. Note that the order of the vector does not matter unlike in the case of Stiefel manifold. For any two elements on the Stiefel manifold whose span corresponds to the same subspace, there exists an orthogonal transformation (mapped by a orthogonal matrix in $O(k)$) between the two orthonormal frames.  These two points will be identified as the same point on the Grassman manifold. Therefore, the Grassmannian can be viewed as the collection of the equivalent classes on the Stiefel manifold, i.e., a quotient space under the group action of $O(k)$, the $k$ by $k$ orthogonal group. Then one has $Gr_k(\R^m)=V_k(\mathbb R^m)/O(k)$.  There are many applications of  Grassmann manifolds, in which the subspaces are the basic element in signal processing, machine learning and so on.

The equivariant embedding for $Gr_k(\R^m)$ also exists \citep{chikuse}. 	Let $X\in V_{k,m}$ be a representative element of the equivalent classes in  $Gr_k(\R^m)=V_k(\mathbb R^m)/O(k)$.  So an element in the quotient space can be  represented by the orbit $\sigma(X)=XR$ where $R\in O(k)$. Then an embedding can be given by
\begin{align*}
J(\sigma(X))=XX^T.
\end{align*}
The collection of $XX^T$ forms a subspace of $\R^{m^2}$.  We now verify that $J$ is an equivariant embedding under the group action of $G=O(m)$.
Letting $g\in G=O(m)$, one has $J(gX)=gXX^Tg^T=\phi(g)J(X),$ where the map $\phi(g)=g$ acts on the image $J(X)$ by the conjugation map. That is, $\phi(g)J(X)=gXX^Tg^T.$

Given the estimate  $\widehat{F}(x)$, the next step is to derive the projection of  $\widehat{F}(x)$ onto $\widetilde M=J(M)$. Since all  $XX^T$ form a subspace, one can use the following procedure to calculate the map from  $\widehat{F}(x)$ to the Grassmann manifold by finding an orthonormal basis for the image. This algorithm is a special case of the projection via Conway embedding \citep{brian2014}.

\begin{enumerate}
\item Find the eigendecomposition $\widehat{F}(x) = Q\Lambda Q^{-1}$
\item Take the $k$ eigenvectors corresponding to the top $k$ eigenvalues in $\Lambda$ as an orthonormal basis for $\widehat{F}_E(x)$, $Q_{[1:k,]}$.
\end{enumerate}

We now consider two illustrative examples, one synthetic and one from a financial time series, for extrinsic kernel regression with subspace response variables. The technique is unique compared to other subspace regression techniques because the extrinsic distance offers a well defined and principled distance between responses of different dimension. This prevents having to constrain the responses to be a fixed dimension or hard coding a heuristic distance between subspaces of different dimension into the distance function.

We now consider a synthetic example in which the predictors are the time points and the responses are points on the Grassmann manifold. Since we represent subspaces with draws from the Stiefel manifold, we draw orthonormal bases from the Matrix von Mises-Fisher distribution as their representation. We generate $N$ draws from the following process with concentration parameter $\kappa$, in which the first $n_1$ draws are of dimension  $4$ and the last $n_2$ draws are of  dimension $5$,
\begin{algorithmic}
\For{$1 \leq t \leq N$}
	\State Draw $X \sim MN(0,I_m, I_{5})$
	\State  $\mu_{[,1]} := t + X_{[,1]}$, \;$\mu_{[,2]} := t - X_{[,2]}$,\; $\mu_{[,3]} := t^2 + X_{[,3]}$,\; $\mu_{[,4]} := tX_{[,4]}$
		\If{$t > n_1$}
			\State $\mu_{[,5]} := t + tX_{[,5]}$
		\EndIf
	\State $Y_t := \operatorname{vMF}(\kappa M)$
\EndFor
\end{algorithmic}
Here the only covariate associated with $Y_t$ is $t$. With a concentration of $\kappa = 1$,  and $n_1 = n_2 = 50$, we generate much noisier data than before, and are able to correctly predict the dimension of the subspace at each time point. When examining the pairwise distance between the realizations in Figure ~\ref{fig:synthdistmat},  it is clear that the extrinsic distance distinguishes between dimensions and does not require any specification of the dimension. The predicted dimension at each time point and the residuals are plotted in Figure ~\ref{fig:synthpred}.

\begin{figure}[ht]
\begin{center}
\includegraphics[scale=.8]{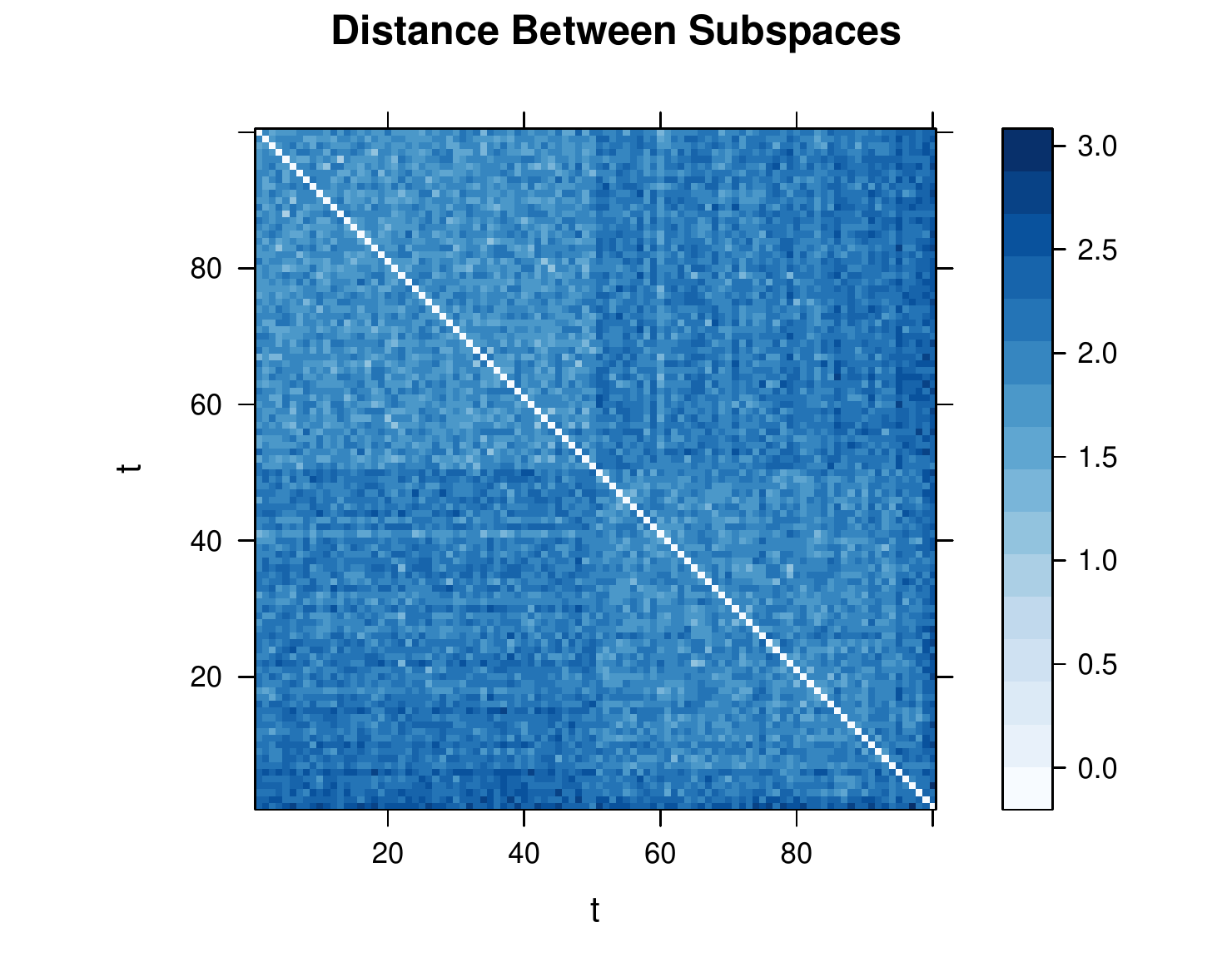}
\end{center}
\caption{Pairwise distance between two observations generated by the specified model indexed by $t$ measured by distance between points in the Conway embedding. This visualization of the extrinsic distance shows the cluster by dimension.}
\label{fig:synthdistmat}
\end{figure}

\begin{figure}[ht]
\begin{center}
\includegraphics[scale=.5]{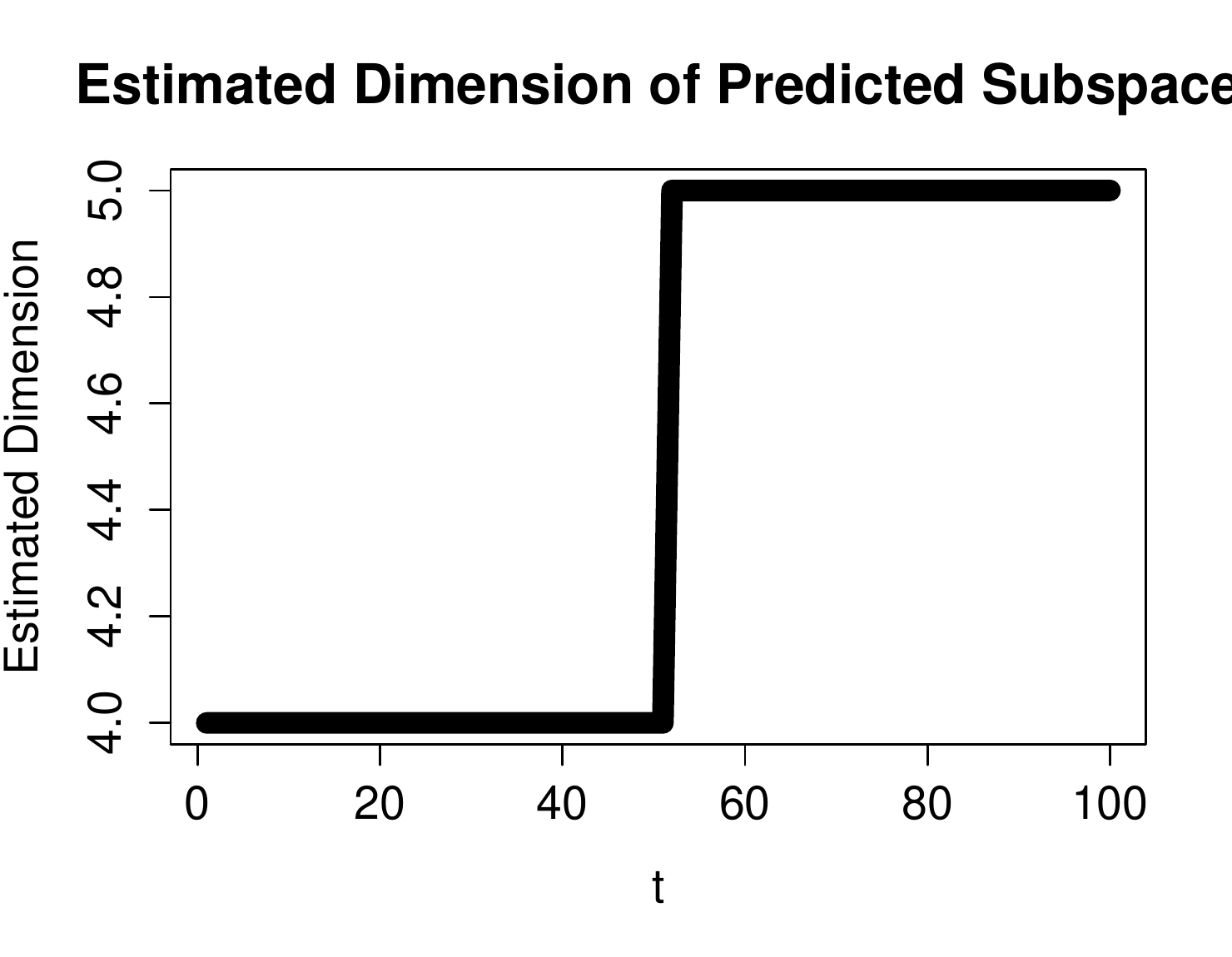}
\includegraphics[scale=.5]{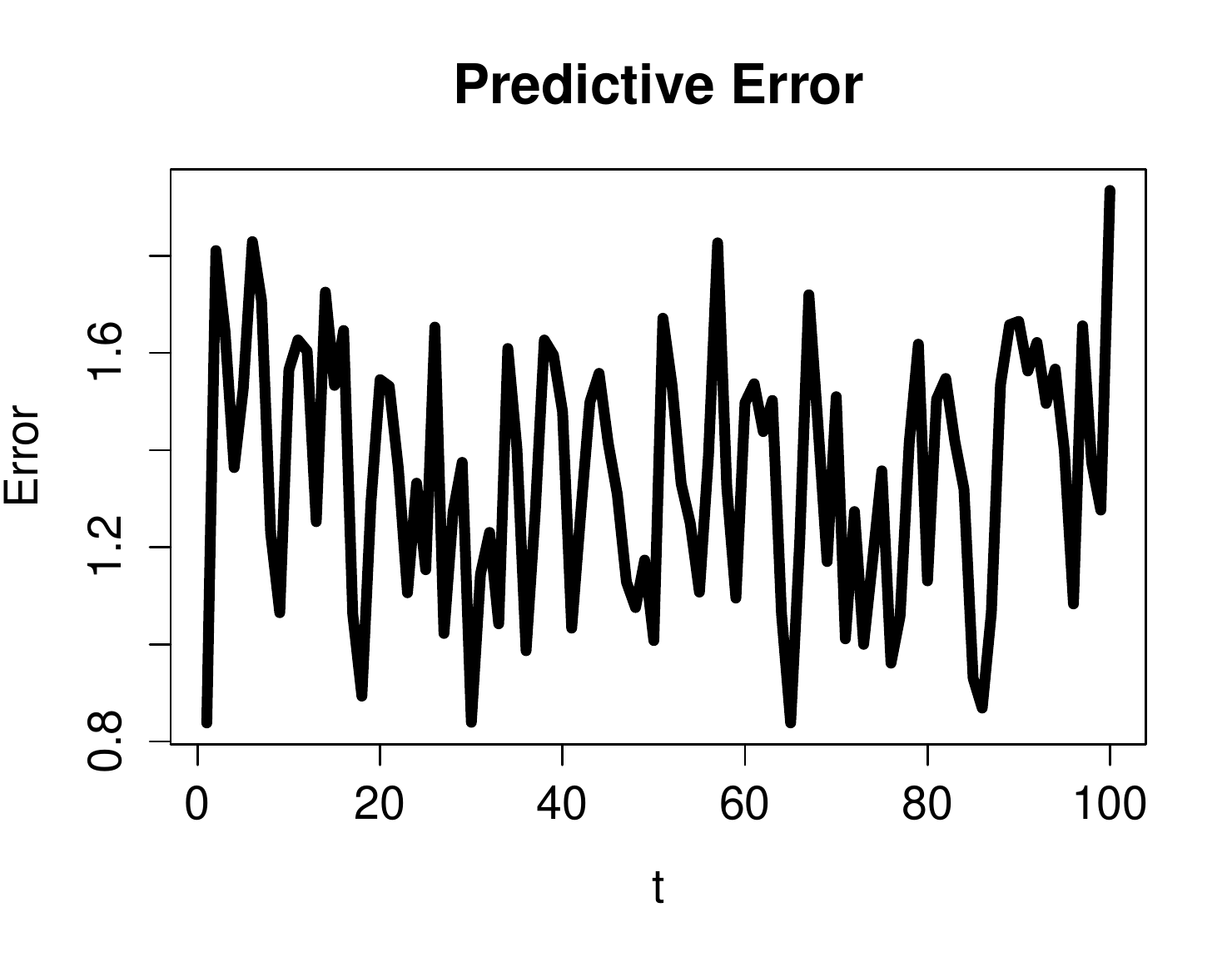}
\end{center}
\caption{The estimated dimension and residual for the extrinsic kernel regression estimate at each time point $t$ from data generated from the specified model. The regression estimate is accurate on the dimension of the subspace and prediction residuals are consistent with a concentration parameter $\kappa = 1$.}
\label{fig:synthpred}
\end{figure}

The key advantage of this method is not requiring any constraints on the dimension of the input or output subspaces. This is important in some examples, such as high dimensional time series analysis with data such as high frequency trading where the analysis usually culminates in analyzing principal components, or eigenvectors of the large covariance matrix estimated between assets. Market events can change asset covariances which in turn changes the number of significant eigenvectors, so a method automatically interpolating time points must not depend on specifying the number of significant eigenvectors.

We apply this method to the Istanbul Stock Exchange on UCI Machine Learning Repository \cite{financedata}, using the 5 index funds S\&P 500 (SP), the Istanbul Stock Exchange (ISE), stock market return index of Japan (NIKKEI), MSCI European index (EU), and the stock market return index of Brazil (BOVESPA). The data contain 97 full weeks over 115 weeks of daily market closing values from January 5, 2009 to February 18, 2011. For each week, a covariance matrix is estimated between the assets, of which the eigenvectors with eigenvalues greater than $10^{-10}$ are retained as the orthonormal basis corresponding to the covariance matrix. As can be seen in Figure ~\ref{fig:finance_covars}, these covariance matrices change significantly over time. These orthonormal matrices are given to the model along with the corresponding week and each week is predicted.

\begin{figure}[ht]
\begin{center}
\includegraphics[scale=.4]{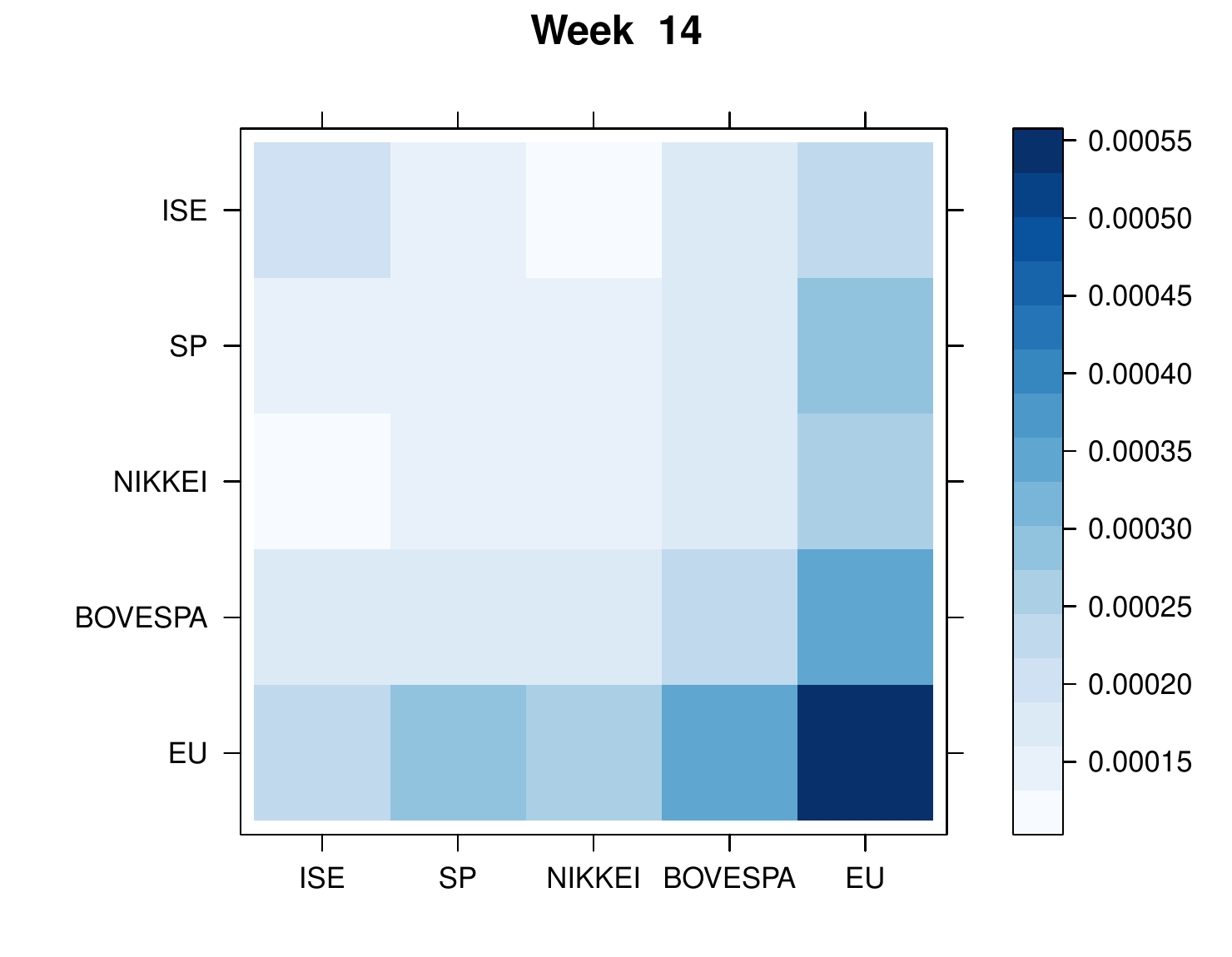}
\includegraphics[scale=.4]{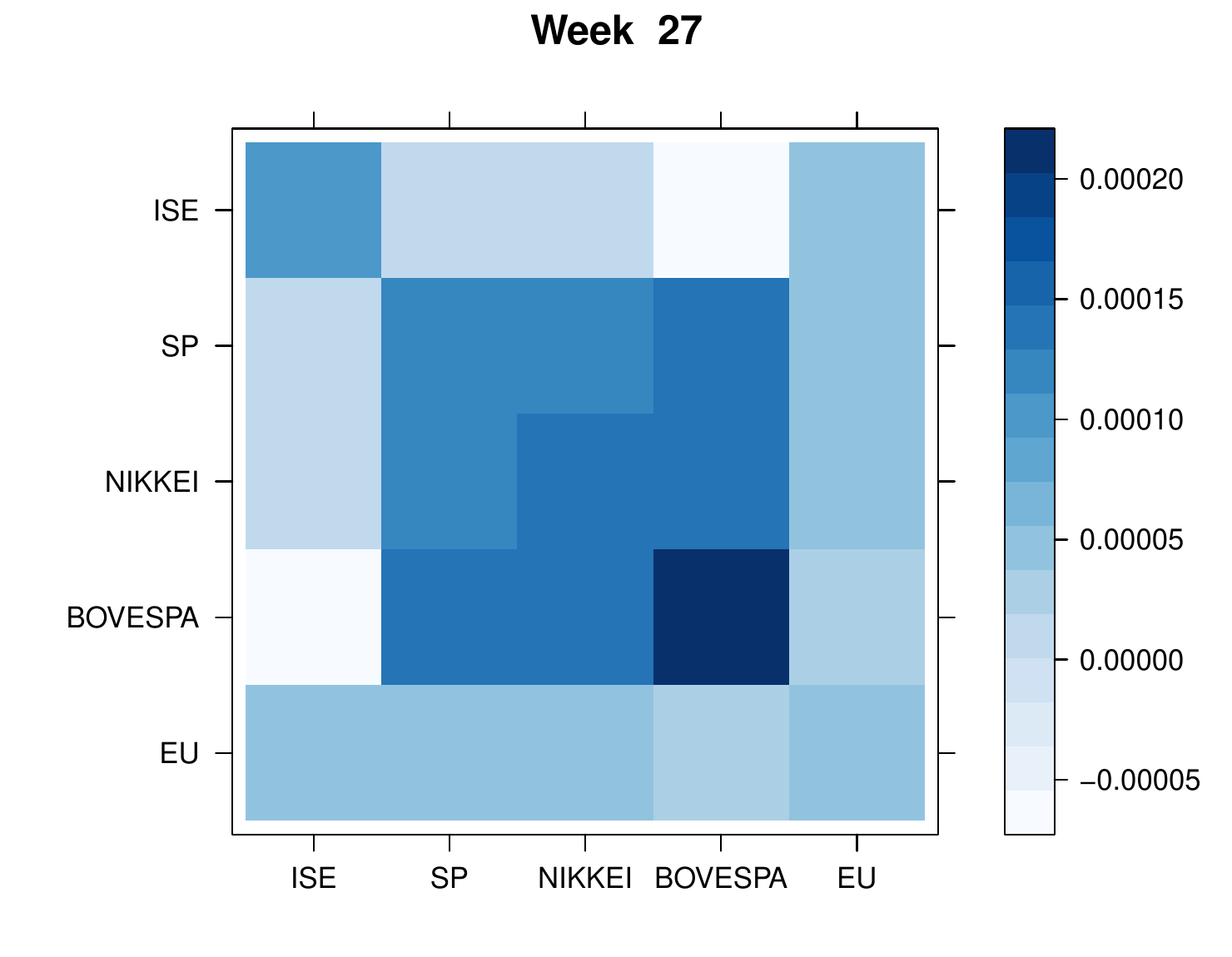}
\includegraphics[scale=.4]{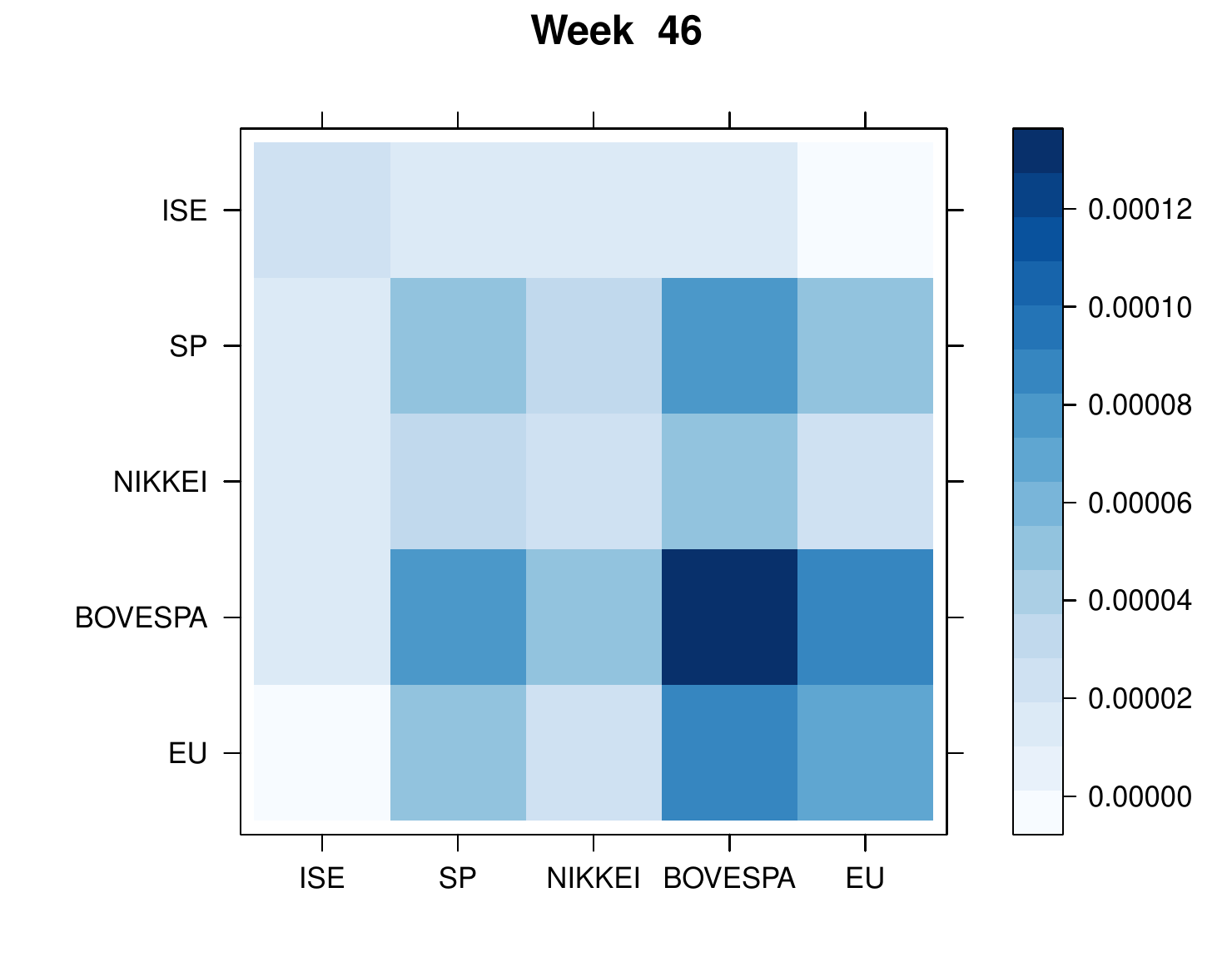}
\includegraphics[scale=.4]{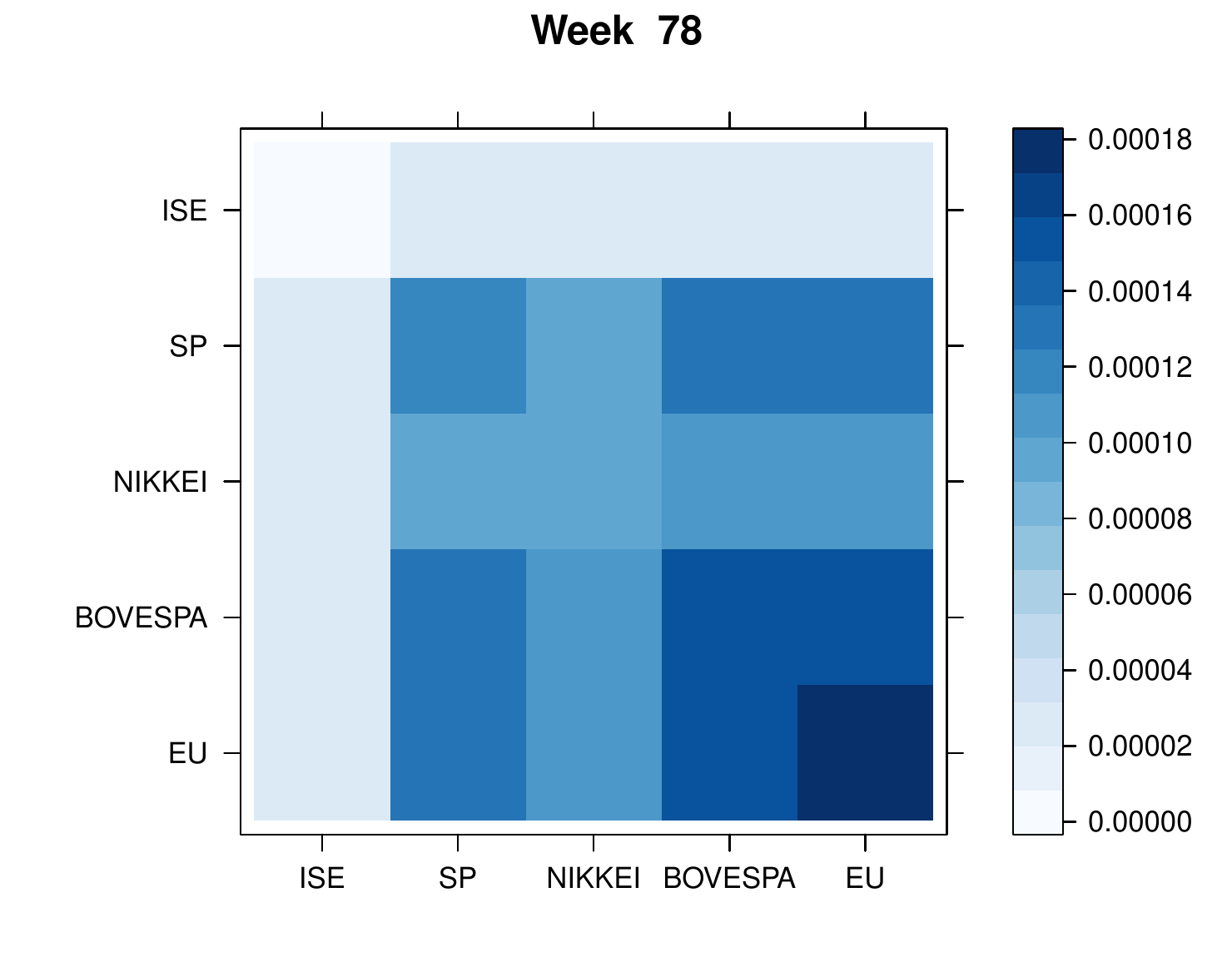}
\end{center}
\caption{The covariance matrices estimated from the daily closing prices from various stock market index funds over various weeks. Covariance between markets change substantially from week to week.}
\label{fig:finance_covars}
\end{figure}

The residuals are shown in Figure ~\ref{fig:financeresids} compared to the observed distance between subspaces between two consecutive observations. The method is predicting the subspaces within the variance of data, which means there is information or at least structure to how the covariance matrices are evolving over time -- the relationships are not purely random week to week.

\begin{figure}[ht]
\begin{center}
\includegraphics[scale=.7]{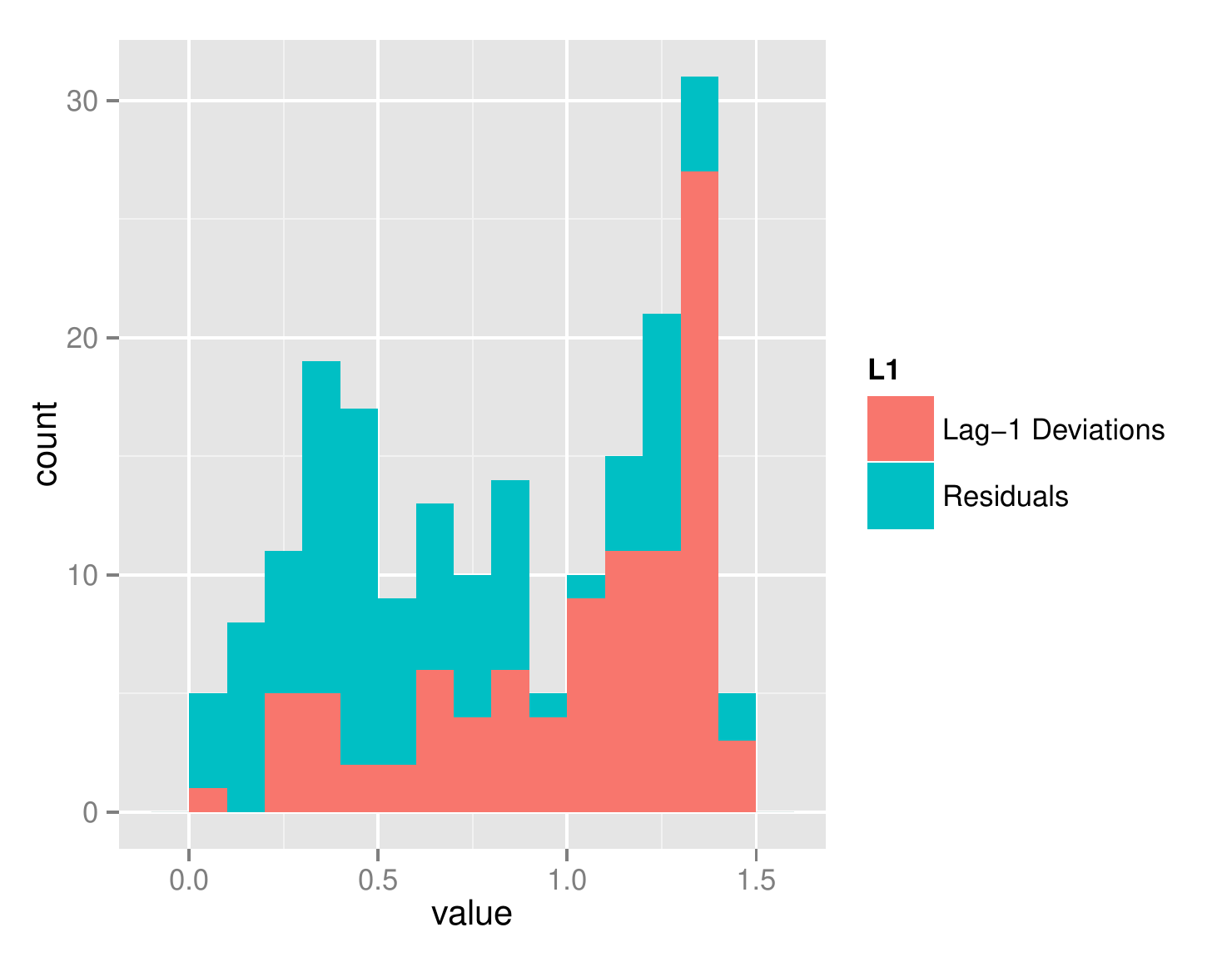}
\end{center}
\caption{The distribution of residuals from extrinsic kernel regression compared to the distribution of the distance between observations. The in general smaller residuals suggest that the changes in covariance structure is dynamic in a learnable way.}
\label{fig:financeresids}
\end{figure}


\end{example}

\section{Asymptotic properties of the extrinsic regression model}
\label{sec-th}

In this section, we investigate the large sample properties of our extrinsic regression estimates. We assume the marginal density $f_X(x)$ is differentiable and  the absolute value of any of the partial derivatives  of $f_X(x)$ of order two are bounded by some constant $C$. In our proof, we assume our kernel function $K$ takes a product form. That is, $K(x)=K_1(x^1)\cdots K_m(x^m)$ where $x=(x^1,\ldots, x^m)$ and $K_1,\ldots, K_m$ are one dimensional symmetric kernels such that $\int_{\mathbb R} K_i(u)du=1$,  $\int_{\mathbb R} uK_i(u)du=0$ and $\int_{\mathbb R} u^2K_i(u)du<\infty$ for $i=1,\ldots, m$. The results can be  generalized to kernels with arbitrary form and with $H$ given by a more general positive definite matrix instead of a diagonal matrix. Theorem \ref{th-ker} derives  the asymptotic distribution of the extrinsic regression estimate $\widehat F_E(x)$ for any $x$. 

\begin{theorem}
\label{th-ker}
Let $\mu(x)=E\left(\widetilde{P}(dy|x)\right)$, which is the conditional mean regression function of $\widetilde P$ and assume $\mu(x)$ is differentiable.  Assume $n|H|\rightarrow \infty$.  Denote $x=(x^1,\ldots, x^m)$.  Let $\widetilde\mu(x)=\mu(x)+\dfrac{Z(x)}{f_X(x)}$, where the $i$th component  $Z_i(x)$ ($i=1,\ldots, D$) of $Z(x)$  is given by  
\begin{align}
\label{eq-bias1}
Z_i(x)=&\nonumber h_1^2\left( \dfrac{\partial f}{\partial x^1}\dfrac{\partial \mu_i}{\partial x^1}+\dfrac{1}{2}f_X(x) \left(\dfrac{\partial^2 \mu_i}{\partial (x^1)^2}+\ldots+ \dfrac{\partial^2 \mu_i}{\partial x^mx^1} \right) \right)\int v_1^2K_1(v_1)dv_1+\ldots\\
&+ h_m^2\left( \dfrac{\partial f}{\partial x^m}\dfrac{\partial \mu_i}{\partial x^m}+   \dfrac{1}{2}f_X(x)\left( \dfrac{\partial^2 \mu_i}{\partial x^1x^m}+\ldots+ \dfrac{\partial^2 \mu_i}{\partial (x^m)^2}\right)  \right)\int v_m^2K_m(v_m)dv_m.
\end{align}  Assume the projection $\mathcal P$ of $\widetilde\mu(x)$ onto $\widetilde{M}=J(M)$ is unique and $\mathcal P$ is continuously differentiable in a neighborhood of $\widetilde\mu(x)$. Then the following holds assuming $P(dy\mid x)\circ J^{-1}$ has finite second moments:

\begin{align}
\label{th-consist}
\sqrt{n|H|}d_{\widetilde\mu(x)} \mathcal P\left(\widehat{F}(x)-\widetilde{\mu}(x)\right)\xrightarrow{L} N(0,\widetilde\Sigma(x)),
\end{align}
 where $d_{\widetilde{\mu}(x)}\mathcal P$ is the differential   from $T_{\widetilde\mu(x)}\R^D$ to $T_{\mathcal P(\widetilde\mu(x))}\widetilde{M}$ of the projection map $\mathcal{P}$ at  $\widetilde{\mu}(x)=\mu(x)+\frac{Z(x)}{f_X(x)}$.  Here $\widetilde\Sigma(x)=B^T\bar \Sigma(x) B$, where $B$ is the $D\times d$ matrix of the differential $d_{\widetilde\mu(x)}\mathcal P$ with respect to  given  orthonormal bases of $T_{\widetilde\mu(x)}\R^D$ and $T_{\mathcal P(\widetilde\mu(x))}\widetilde{M}$,  and the $(j,k)$th entry of $\bar \Sigma(x)$ is given by \eqref{eq-sigbar} with
\begin{align}\bar \Sigma_{jk}=\dfrac{\sigma(J_j(y), J_k(y))\int K(v)^2dv}{f_X(x)},
\end{align} where $\sigma(J_j(y), J_k(y))=\cov(J_j, J_k)$, and $J_j$ is the $j$th element of $J(y)$.  Here $\xrightarrow{L}$ indicates convergence in distribution.

\end{theorem}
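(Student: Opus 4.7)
\emph{Proof proposal.} The plan is to reduce the claim to a Euclidean multivariate CLT for the pre-projection Nadaraya--Watson estimator $\widehat F(x)\in\R^D$, and then transfer it to $\widetilde M$ through the delta method with the linearization $d_{\widetilde\mu(x)}\mathcal P$. Writing
\[
\widehat F(x)=\frac{N_n(x)}{D_n(x)},\qquad N_n(x)=\tfrac{1}{n}\sum_{i=1}^{n} K_H(x_i-x)J(y_i),\qquad D_n(x)=\tfrac{1}{n}\sum_{i=1}^{n} K_H(x_i-x),
\]
the denominator is the usual multivariate kernel density estimator, so $D_n(x)\xrightarrow{P} f_X(x)$ with fluctuations of order $(n|H|)^{-1/2}$; the overall strategy is to analyze $N_n(x)$ in detail and then divide.

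Next I would compute the mean and covariance of $N_n(x)$. By the change of variables $v=H^{-1}(x_i-x)$ and a second-order Taylor expansion of $f_X(x+Hv)\mu(x+Hv)$ around $x$, odd-order terms vanish by symmetry of each one-dimensional kernel and the quadratic terms collect exactly into
\[
E[N_n(x)]=f_X(x)\mu(x)+Z(x)+o\!\left(\max_j h_j^{2}\right),
\]
with $Z_i(x)$ matching \eqref{eq-bias1}. Combined with the expansion $D_n(x)^{-1}=f_X(x)^{-1}(1+o_p(1))$, this identifies $\widetilde\mu(x)=\mu(x)+Z(x)/f_X(x)$ as the correct recentering point. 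For the covariance, a similar change of variables yields $|H|\cdot\cov\!\left(K_H(x_i-x)J(y_i)\right)\to f_X(x)\,\Sigma_J(x)\!\int K(v)^{2}dv$, where $\Sigma_J(x)_{jk}=\sigma(J_j(y),J_k(y))$ is the conditional covariance of $J(y)$ given $X=x$. Existence of second moments of $P(dy\mid x)\circ J^{-1}$ together with boundedness of $K$ supplies the Lyapunov condition for the i.i.d.\ summands.

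A Cram\'er--Wold reduction and the Lindeberg--Feller multivariate CLT then give $\sqrt{n|H|}(N_n(x)-E[N_n(x)])\xrightarrow{L} N_D\!\bigl(0,\,f_X(x)\Sigma_J(x)\!\int K^{2}\bigr)$, and Slutsky's theorem (to divide by $D_n(x)\xrightarrow{P} f_X(x)$) combined with the bias analysis produces
\[
\sqrt{n|H|}\bigl(\widehat F(x)-\widetilde\mu(x)\bigr)\xrightarrow{L} N_D(0,\bar\Sigma(x)),
\]
with $\bar\Sigma(x)$ exactly as stated. The delta method, using continuous differentiability of $\mathcal P$ on a neighborhood of $\widetilde\mu(x)$, then yields
\[
\sqrt{n|H|}\,d_{\widetilde\mu(x)}\mathcal P\bigl(\widehat F(x)-\widetilde\mu(x)\bigr)\xrightarrow{L} N\!\bigl(0,\,B^{T}\bar\Sigma(x)B\bigr)=N(0,\widetilde\Sigma(x)),
\]
which is the claim.

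The main technical obstacle is the bias bookkeeping in the Taylor expansion: one has to track the second-order terms in both $N_n(x)$ and in the reciprocal of $D_n(x)$ carefully so that the cross products assemble into exactly the componentwise formula \eqref{eq-bias1}, rather than into some slightly different combination of partial derivatives. Once this calculation is pinned down, the remaining ingredients (multivariate CLT for triangular arrays of kernel-weighted vectors, Slutsky, and the linear delta method for the $C^1$ projection $\mathcal P$) are standard and follow the same template as the classical Euclidean Nadaraya--Watson CLT.
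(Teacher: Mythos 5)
Your proposal is correct and follows essentially the same route as the paper's own proof: the numerator/denominator decomposition of the Nadaraya--Watson estimator, the change of variables plus second-order Taylor expansion yielding the bias vector $Z(x)$, the covariance computation giving $\bar\Sigma(x)$, the CLT with Slutsky's theorem, and finally the delta method through $d_{\widetilde\mu(x)}\mathcal P$. The only cosmetic difference is that you expand the product $f_X(x+Hv)\mu(x+Hv)$ jointly while the paper expands the two factors separately and multiplies, which leads to the same bookkeeping.
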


Corollary \ref{coro-mise} is on the mean integrated squared error of the estimates. 

\begin{corollary}
\label{coro-mise}
Assuming  the same conditions of Theorem \ref{th-ker} and  the covariate space is bounded, the mean integrated squared error of $\widehat{F}_E(x)$ is of the order $O(n^{-4/(m+4)})$, with the choice of $h_i$'s ($i=1,\ldots, m$) to be of the same order, that is, of $O(n^{-1/(m+4)})$.
\end{corollary}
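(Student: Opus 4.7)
The plan is to combine the pointwise bias and variance characterizations of Theorem \ref{th-ker} into a pointwise MSE bound, and then integrate over the bounded covariate space. The bias--variance decomposition is the standard one for kernel-type estimators, but transported through the embedding $J$ and the projection $\mathcal P$.

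First I would work out the pointwise MSE of $\widehat F_E(x)$ measured via the extrinsic distance $\rho(\widehat F_E(x),F(x))=\lVert\mathcal P(\widehat F(x))-\mathcal P(\mu(x))\rVert$. Splitting
\begin{equation*}
\mathcal P(\widehat F(x))-\mathcal P(\mu(x))=\bigl[\mathcal P(\widehat F(x))-\mathcal P(\widetilde\mu(x))\bigr]+\bigl[\mathcal P(\widetilde\mu(x))-\mathcal P(\mu(x))\bigr],
\end{equation*}
the first bracket is the stochastic part governed by Theorem \ref{th-ker} and contributes a variance of order $\Tr(\widetilde\Sigma(x))/(n|H|)$. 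The second bracket is the bias, and since $\mathcal P$ is continuously differentiable at $\mu(x)$, a first-order Taylor expansion gives $\mathcal P(\widetilde\mu(x))-\mathcal P(\mu(x))=d_{\mu(x)}\mathcal P\bigl(Z(x)/f_X(x)\bigr)+o(\lVert Z(x)/f_X(x)\rVert)$, which has squared norm of order $\lVert Z(x)\rVert^2/f_X(x)^2$.

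Next, I would plug in the explicit form of $Z(x)$ from \eqref{eq-bias1}. Each summand of $Z_i(x)$ is $h_j^2$ times a quantity depending on derivatives of $f_X$, $\mu_i$ and moments of the one-dimensional kernels. Under the stated smoothness hypotheses and on a bounded covariate set $\mathcal X$, these derivatives are uniformly bounded, so $\sup_{x\in\mathcal X}\lVert Z(x)\rVert^2=O\bigl((h_1^2+\cdots+h_m^2)^2\bigr)$. Similarly, the entries of $\widetilde\Sigma(x)$ are uniformly bounded because $J$ maps into a bounded set in $E^D$ (for the manifolds of interest) or at worst has uniformly bounded second moments under the hypothesis of Theorem \ref{th-ker}, and $f_X$ is bounded away from zero on $\mathcal X$ (a standard regularity condition I would state as part of the hypothesis). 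Thus
\begin{equation*}
\mathrm{MSE}(x)=O\!\left(\bigl(h_1^2+\cdots+h_m^2\bigr)^2\right)+O\!\left(\frac{1}{n|H|}\right)
\end{equation*}
uniformly in $x\in\mathcal X$.

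Finally, I would integrate over $\mathcal X$; since $\mathcal X$ is bounded and $f_X$ is integrable, the MISE inherits the same rate as the uniform pointwise MSE. Choosing $h_1=\cdots=h_m\asymp n^{-1/(m+4)}$ balances the squared bias term $O(h^4)$ against the variance term $O(1/(nh^m))$, both equal to $O(n^{-4/(m+4)})$, yielding the stated rate.

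The main technical obstacle is the second step: justifying the first-order Taylor expansion of $\mathcal P$ uniformly over a neighborhood of $\{\mu(x):x\in\mathcal X\}$, together with controlling the remainder in $\mathcal P(\widehat F(x))-\mathcal P(\widetilde\mu(x))$ at the level of second moments (not just in distribution as given by Theorem \ref{th-ker}). Upgrading convergence in distribution to convergence of second moments requires a uniform integrability argument, for instance bounding $\mathbb E\lVert\widehat F(x)-\widetilde\mu(x)\rVert^{2+\delta}$, which follows from Rosenthal-type inequalities for the kernel average once $J$ has finite $(2+\delta)$-moments under $\widetilde P(\cdot\mid x)$. The remaining ingredients---boundedness of $f_X^{-1}$, smoothness of $\mathcal P$ near $\mu(x)$, and smoothness of $\mu$---are already built into the hypotheses of Theorem \ref{th-ker}.
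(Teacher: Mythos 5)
Your proposal is correct and follows essentially the same route as the paper: linearize the projection $\mathcal P$ at the population value, decompose the pointwise error into a squared bias of order $O(h_1^4+\cdots+h_m^4)$ coming from $Z(x)/f_X(x)$ and a variance of order $O(1/(n|H|))$ coming from $\bar\Sigma(x)$, integrate over the bounded covariate space, and balance the two terms with $h_i\asymp n^{-1/(m+4)}$. If anything you are more explicit than the paper about the one real technical point---upgrading the distributional convergence of Theorem \ref{th-ker} to convergence of second moments and uniformity of the Taylor remainder, together with $f_X$ bounded away from zero---which the paper dispatches in one line via Fatou's lemma and differentiability of the Jacobian.
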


\begin{remark}
Note that in nonparametric regression with both predictors ($m$-dimensional) and responses in the Euclidean space, the optimal order of the mean integrated squared error is $O(n^{-4/(m+4)})$ under the assumption that the true regression function has bounded second derivative. Our method achieves the same rates. However,  whether such rates are minimax in the context of manifold valued response is not known.
\end{remark}

Theorem  \ref{th-uniform} shows some results on uniform convergence rates of the estimator.
\begin{theorem}
\label{th-uniform}
Assume the covariate space $x \in \mathcal X\subset \R^m$ is compact and $\mathcal P$ has continuous first derivative.  Then 
\begin{align}
\label{eq-uniform}
\sup_{x\in \mathcal{X}}\|d_{\widetilde\mu(x)}\mathcal{P} \left ( \widehat{F}(x)-E(\widehat{F}(x))\right)\|=O_p\left(\log^{1/2} n/\sqrt{n|H|}\right).
\end{align}
\end{theorem}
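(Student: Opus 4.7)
The plan is to split the task into a Euclidean uniform-convergence bound for the pre-projection estimator $\widehat F(x)$ and a transfer step through the differential of $\mathcal P$. Since $\mathcal P$ is $C^1$ and $\mathcal X$ is compact, the set $\{\widetilde\mu(x):x\in\mathcal X\}$ is a compact subset of $\R^D$ on which the operator norm $\|d_z\mathcal P\|_{\mathrm{op}}$ is uniformly bounded by some constant $L<\infty$. Hence, once we prove
\begin{align}
\label{eq-euclid-unif}
\sup_{x\in\mathcal X}\|\widehat F(x)-E\widehat F(x)\| = O_p\!\left(\sqrt{\log n/(n|H|)}\right),
\end{align}
the conclusion of the theorem follows from the linear bound
\[
\sup_{x\in\mathcal X}\|d_{\widetilde\mu(x)}\mathcal P(\widehat F(x)-E\widehat F(x))\| \le L\sup_{x\in\mathcal X}\|\widehat F(x)-E\widehat F(x)\|.
\]

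To establish \eqref{eq-euclid-unif}, write $\widehat F(x) = \widehat g(x)/\widehat f_X(x)$ with $\widehat g(x) = n^{-1}\sum_i J(y_i) K_H(x_i-x)$ and $\widehat f_X(x)= n^{-1}\sum_i K_H(x_i-x)$, and use the standard decomposition
\[
\widehat F(x)-E\widehat F(x) = \frac{\widehat g(x)-E\widehat g(x)}{\widehat f_X(x)} - \frac{E\widehat g(x)}{E\widehat f_X(x)}\cdot\frac{\widehat f_X(x)-E\widehat f_X(x)}{\widehat f_X(x)} + R_n(x),
\]
where $R_n(x)$ is of strictly smaller order once $\widehat f_X$ is uniformly close to $f_X$. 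Under the implicit assumption $\inf_{x\in\mathcal X}f_X(x)>0$ (needed for $\widetilde\mu$ to be well defined on $\mathcal X$), the denominators are uniformly bounded below with high probability, so it suffices to bound the two stochastic fluctuations $\sup_{x\in\mathcal X}|\widehat f_X(x)-E\widehat f_X(x)|$ and $\sup_{x\in\mathcal X}\|\widehat g(x)-E\widehat g(x)\|$ by $O_p(\sqrt{\log n/(n|H|)})$.

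For both fluctuations I would invoke the classical Mack--Silverman / Masry chaining argument. Cover $\mathcal X$ by $N_n = O((n/|H|)^{m/2})$ balls of radius $\delta_n\asymp (|H|/n)^{1/2}$; at each centre $x_\ell$ apply Bernstein's inequality to the coordinate-wise centred i.i.d.\ sum $n^{-1}\sum_i \{J_j(y_i)K_H(x_i-x_\ell)-E[\cdots]\}$, using that each summand is bounded by $O(|H|^{-1})$ (after truncation; see below) and has variance $O(|H|^{-1})$ under the finite second moments of $J(y)$; a union bound over the $N_n$ centres contributes the $\log n$ factor and yields a deviation of order $\sqrt{\log n/(n|H|)}$ on the grid. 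The within-ball oscillation is controlled by Lipschitz regularity of the product kernel $K_H$ specified in the preamble to this section, at a cost of order $\delta_n/|H|^{m+1}\cdot$(sample size) which is absorbed into the same rate by the choice of $\delta_n$. Applied coordinate-by-coordinate in $\R^D$ this delivers \eqref{eq-euclid-unif}.

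The main technical hurdle is the Bernstein step when $J(y)$ is unbounded: in every example of Section \ref{sec-simu} the image $J(M)$ is compact so Hoeffding suffices, but in general one truncates $J(y_i)$ at a mild level $\tau_n=O(\log n)$, handles the truncation bias via the finite-second-moment assumption and Markov, and applies Bernstein to the truncated summands; the extra $\log n$ from truncation does not worsen the rate. A secondary point is the uniform lower bound on $\widehat f_X(x)$, which is a direct byproduct of the $\sup_x|\widehat f_X(x)-f_X(x)|=o_p(1)$ bound established en route, combined with the standard bias estimate for $E\widehat f_X(x)-f_X(x)$ already used in Theorem \ref{th-ker}.
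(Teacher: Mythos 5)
Your proposal is correct and follows essentially the same route as the paper: the paper likewise reduces the claim to a uniform bound on the entries of the differential matrix $B$ over the compact set $\mathcal X$ (obtained from continuity of $d\mathcal P$ plus compactness, combined with Cauchy--Schwarz), together with the coordinate-wise rate $\sup_{x\in \mathcal X}\lvert \widehat{F}_j(x)-E(\widehat{F}_j(x))\rvert=O_p\left(\log^{1/2} n/\sqrt{n|H|}\right)$. The only real difference is that the paper imports that coordinate-wise bound by citing Theorem 2 of its uniform-rates reference, whereas you re-derive it via the standard covering-plus-Bernstein argument; that derivation is fine in outline, though your grid spacing $\delta_n\asymp (|H|/n)^{1/2}$ is slightly too coarse for the Lipschitz oscillation term once the bandwidths shrink, and should be taken smaller (e.g.\ $\delta_n=n^{-2}$), which changes nothing in the rate since $\log N_n$ remains $O(\log n)$.
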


As pointed out in Remark \ref{re-poly}, it is ideal in many cases  to fit a higher order (say $p$th order)  local polynomial model in estimating $\mu(x)$ before projecting back onto the image of the manifold. Such estimates are more appealing especially when $F(x)$ is more curved over a neighborhood of $x$. One can show that similar results as those of Theorem \ref{th-ker} hold, though with much more involved argument.

We now give details of such estimators and their asymptotic distributions are derived in Theorem \ref{th-poly}.
Recall $F(x)=E\left(P\left(dy\mid x\right)\right)$ and  $\mu(x)=E\left(\widetilde P(dy\mid x)\right)$ and $J(y_1),\ldots, J(y_n)$ are the points on $\widetilde {M}=J(M)$ after embedding $J$. We first obtain an estimate $\widehat{F}(x)$ of $\mu(x)$ using  $p$th order local polynomials estimation. The intermediate estimate   $\widehat{F}(x)$  is then projected back to $\widetilde M$ serving as the ultimate estimate of $F(x)$. The general framework is given as follows:
\begin{align}
\label{eq-pthlocalpoly}
\{\hat{\beta}^j_{\bk}(x)\}_{0\leq |\bk|\leq p,\;1\leq j\leq D}& \\ \nonumber
=\argmin_{\{\beta^j_{\bk}(x)\}_{0\leq |\bk|\leq p,\;1\leq j\leq D} }\sum_{i=1}^n \Big(&\big\|J(y_i)-\big(\sum_{0\leq |\bk|\leq p}\beta^1_{\bk}(x)(x_i-x)^{|\bk|},\ldots,\sum_{0\leq |\bk|\leq p}\beta^D_{\bk}(x)(x_i-x)^{|\bk|} \big)^T\big\|^2\\
&\times K_H(x_i-x)\Big).
\end{align}
Some of the notation used in \eqref{eq-pthlocalpoly} are given as follows:
\begin{align*}
\bk=(k_1,\ldots, k_m),\; |\bk|=\sum_{l=1}^m k_l,\;|\bk|\in \{0,\ldots, p\}, \\
\bk!=k_1!\times\ldots\times k_m!,\; x^{\bk}=(x^1)^{k_1}\times\ldots\times (x^m)^{k_m}\\
\sum_{0\leq |\bk|\leq p}=\underset{|\bk|=k_1+\ldots+k_m=j}{\sum_{j=0}^{p}\sum_{k_1=0}^j\ldots\sum_{k_m=0}^j}.
\end{align*}
When $\bk$=0, $\left(\widehat\beta_{\boldsymbol 0}^1,\ldots, \widehat\beta_{\boldsymbol 0}^D\right)^T$ corresponds to the kernel estimator, which is the same as  the estimator given in \eqref{eq-kesti}. When $p=1$, $\left(\widehat\beta_{ \bk=0}^1,\ldots, \widehat\beta_{\bk=0}^D\right)^T$ coincides with the estimator $\widehat{\boldsymbol{\beta}}_0$ in \eqref{eq-localinear}.

Finally, we have
\begin{align}
\widehat{F}(x)&=\hat{\boldsymbol\beta}_0(x)=\left(\widehat\beta_{ \bk=0}^1,\ldots, \widehat\beta_{\bk=0}^D\right)^T\label{eq-polyesti2},\\
\widehat{F}_E(x)&=J^{-1}\left(\mathcal P(\widehat{F}(x))\right)=J^{-1}\left(\argmin_{q\in \widetilde{M}}||q-\widehat{F}(x)||\right) \label{eq-polyesti3}.
\end{align}

Theorem \ref{th-poly} derives the asymptotic distribution of $\widehat{F}_E(x)$, with $\widehat{F}(x)$ obtained using $p$th order polynomials local regression of $J(y_1),\ldots, J(y_n)$ given in \eqref{eq-polyesti2}.

\begin{theorem}
\label{th-poly}
 Let $\widehat{F}_E(x)$ be given in \eqref{eq-polyesti3}. Assume the $(p+2)$th moment of the kernel function $K(x)$ exists and  $\mu(x)$ is ($p+2$)th order differentiable in a neighborhood of $x=(x^1,\ldots, x^m)$. Assume the projection $\mathcal P$ of $\widetilde \mu(x)$ onto $\widetilde{M}=J(M)$ is unique and $\mathcal P$ is continuously differentiable in a neighborhood of $\widetilde{\mu}(x)$, where  $\widetilde{\mu}(x)=\mu(x)+\Bias(x)$, with $\Bias(x)$ given in \eqref{eq-polybias}. If  $P(dy\mid x)\circ J^{-1}$ has finite second moments, then we have:

 \begin{align}
\label{th-consist2}
\sqrt{n|H|}d_{\widetilde{\mu}(x)} \mathcal P\left(\widehat{F}(x)-\widetilde{\mu}(x)\right)\xrightarrow{L} N(0,\widetilde\Sigma(x)),
\end{align}
 where $d_{\widetilde{\mu}(x)}\mathcal P$ is the differential  from $T_{\widetilde{\mu}(x)}\R^D$ to $T_{\mathcal P\widetilde{\mu}(x)}\widetilde M$ of the projection map $\mathcal{P}$ at  $\widetilde{\mu}(x)$.  Here $\Sigma(x)=B^T\bar \Sigma(x) B$, where $B$ is the $D\times d$ matrix of the differential $d_{\widetilde{\mu}(x)}\mathcal P$ with respect to given orthonormal basis of tangent space $T_{\widetilde{\mu}(x)}\R^D$ and tangent space $T_{\mathcal P\widetilde{\mu}(x)}\widetilde M$ and the $jk$th entry of $\bar \Sigma(x)$ is given by \eqref{eq-sigbar2}.  Here $\xrightarrow{L}$ indicates convergence in distribution.
\end{theorem}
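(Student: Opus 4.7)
The plan is to reduce the statement to the combination of (a) the classical asymptotic normality theory for multivariate-input local polynomial regression applied coordinatewise, and (b) the delta method applied through the (continuously differentiable) projection $\mathcal{P}$. First I would observe that the objective in \eqref{eq-pthlocalpoly} decouples across the $D$ output coordinates: each coordinate $\widehat{\beta}^j_{\bk}(x)$ is precisely the $p$th order local polynomial fit of the scalar response $J_j(y_i)$ on the $m$-dimensional covariate $x_i$ with weights $K_H(x_i-x)$. Therefore $\widehat{F}(x)=(\widehat{\beta}^1_{\mathbf{0}},\ldots,\widehat{\beta}^D_{\mathbf{0}})^T$ is just the stacked intercept vector from $D$ independent (in the sense of the fit, not the data) scalar local polynomial regressions.

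Next I would invoke the standard $p$th order multivariate local polynomial theory (as in \cite{fan1996local}): under the stated smoothness of $\mu(x)$ through order $p+2$, the existence of the $(p+2)$th moment of $K$, and $n|H|\to\infty$, each coordinate satisfies a pointwise bias-plus-variance expansion. The bias term, after applying the usual Taylor expansion of $\mu_j(x_i)$ to order $p+1$ and computing the inverse of the design moment matrix weighted by the kernel, yields precisely the expression $\Bias(x)$ referenced in \eqref{eq-polybias} (giving $\widetilde{\mu}(x)=\mu(x)+\Bias(x)$). For the variance piece I would verify a Lindeberg--Feller CLT for the sum $\sum_i K_H(x_i-x)\,e_1^T S_n^{-1}[1,(x_i-x)^{\bk}/\bk!,\ldots]^T (J(y_i)-\mu(x_i))$, using the finite second moment of $P(dy\mid x)\circ J^{-1}$ and $n|H|\to\infty$ to check Lindeberg. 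A Cramer--Wold argument then upgrades to joint normality of $\sqrt{n|H|}(\widehat{F}(x)-\widetilde{\mu}(x))$ with covariance $\bar\Sigma(x)$ whose $(j,k)$ entry is the cross-covariance of $J_j(y),J_k(y)$ evaluated as in \eqref{eq-sigbar2}.

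Finally, I would apply the delta method to transfer this Euclidean CLT through the projection $\mathcal{P}$. Because $\mathcal{P}$ is continuously differentiable on a neighborhood of $\widetilde{\mu}(x)$ and $\widehat{F}(x)\to\widetilde{\mu}(x)$ in probability (a consequence of Theorem~\ref{th-ker}-type consistency, or directly from the variance bound), we have
\begin{equation*}
\mathcal{P}(\widehat{F}(x))-\mathcal{P}(\widetilde{\mu}(x)) = d_{\widetilde{\mu}(x)}\mathcal{P}\bigl(\widehat{F}(x)-\widetilde{\mu}(x)\bigr)+o_p\bigl(\|\widehat{F}(x)-\widetilde{\mu}(x)\|\bigr).
\end{equation*}
Multiplying by $\sqrt{n|H|}$, the remainder is $o_p(1)$, and the linear term converges to $N(0,B^T\bar\Sigma(x)B)$, where $B$ is the matrix representation of $d_{\widetilde{\mu}(x)}\mathcal{P}$ in the chosen orthonormal bases of $T_{\widetilde{\mu}(x)}\mathbb{R}^D$ and $T_{\mathcal{P}\widetilde{\mu}(x)}\widetilde{M}$. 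This yields \eqref{th-consist2}.

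The main obstacle I anticipate is purely bookkeeping rather than conceptual: writing the bias expansion for a $p$th order local polynomial regression with an $m$-dimensional covariate requires handling the multi-index $\bk$ and inverting the weighted design moment matrix, and the leading-order term only appears after cancelling all polynomial coefficients of order $\le p$ in the Taylor expansion of $\mu$. A secondary but lighter point is verifying that the kernel-weighted empirical design matrix converges in probability to its population analog (nonsingular at $x$ since $f_X(x)>0$), which is what allows the inverse to be taken uniformly in the asymptotics; this follows from standard uniform law of large numbers arguments under the compactness assumptions already invoked in Theorem~\ref{th-uniform}.
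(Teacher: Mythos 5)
Your proposal is correct and follows essentially the same route as the paper: reduce $\widehat{F}(x)$ to $D$ coordinatewise scalar local polynomial fits, import the standard multivariate local polynomial bias/variance expansions (the paper cites Theorem 1 of Gu et al.\ for the explicit bias formulas \eqref{eq-polybias1}--\eqref{eq-polybias2} where you propose to rederive them), establish the Euclidean CLT for $\sqrt{n|H|}(\widehat{F}(x)-\widetilde{\mu}(x))$ with covariance \eqref{eq-sigbar2}, and push it through $d_{\widetilde{\mu}(x)}\mathcal{P}$ by the delta method with Slutsky. Your explicit Lindeberg--Feller/Cram\'er--Wold step and the remark on invertibility of the kernel-weighted design matrix make precise what the paper leaves implicit, but the argument is the same.
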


\begin{remark}
Note that the order of the bias term $\Bias(x)$ (given in \eqref{eq-polybias}) differs when $p$ is even (see \eqref{eq-polybias1}) and when $p$ is odd (see \eqref{eq-polybias2}).
\end{remark}

\section{Conclusion}

We have proposed an extrinsic regression framework for modeling data with manifold valued responses and shown desirable asymptotic properties of the resulting estimators. We applied this framework to a variety of applications, such as  responses restricted to the sphere, shape spaces, and linear subspaces. The principle motivating this framework is that kernel regression and Riemannian geometry both rely on locally Euclidean structures. This property allows us to construct inexpensive estimators without loss of predictive accuracy as demonstrated by the asymptotic behavior of the mean integrated square error, and also the empirical results. Empirical results even suggest that the extrinsic estimators may perform better due to their reduced complexity and ease of optimizing tuning parameters such as kernel bandwidth. Future work may also use this principle to guide sampling methodology when trying to sample parameters from a manifold or optimizing an EM-algorithm, where it may be computationally or mathematically difficult to restrict intermediate steps to the manifold.


\section*{Appendix}

\begin{proof}[Proof of Theorem \ref{th-ker}]

 Recall
\begin{equation*}
\widehat{F}(x)=\dfrac{\frac{1}{n }\sum_{i=1}^nJ(y_i)K_H(x_i-x)}{\frac{1}{n }\sum_{i=1}^nK_H(x_i-x)} .
\end{equation*}
Denote  the denominator of $\widehat{F}(x)$ as
$$\widehat{f}(x)=\frac{1}{n }\sum_{i=1}^nK_H(x_i-x)=\frac{1}{n\mid H\mid  }\sum_{i=1}^nK(x_i-x).$$
It is standard to show 
\begin{align}
\label{eq-kerD}
\widehat{f}(x)\xrightarrow{P}f_X(x)
\end{align}
where $\xrightarrow{P}$ indicates convergence in probability.
For the numerator term of $\widehat{F}(x)$, one has
\begin{align*}
E\left(\frac{1}{n }\sum_{i=1}^nJ(y_i)K_H(x_i-x))\right)&=\frac{1}{n }\sum_{i=1}^n E\left(J(y_i)K_H(x_i-x))  \right)\\
&=\frac{1}{n }\sum_{i=1}^n  \int E\left( J(y_i)K_H(x_i-x)) \mid x_i \right) f_X(x_i)dx_i\\
&=\frac{1}{n }\sum_{i=1}^n  \int \mu(x_i)K_H(x_i-x)) f_X(x_i)dx_i\\
&=\int \mu(\widetilde x )K_H(\widetilde x-x)) f_X(\widetilde x)d\widetilde x.
\end{align*}
Noting that $\mu(x)=(\mu_1(x),\ldots, \mu_D(x))'\in \mathbb R^D$, we slightly abuse  the integral notation above meaning that the $j$th entry of $E\left(n^{-1}\sum_{i=1}^nJ(y_i)K_H(x_i-x))\right)$ is given by
\begin{align*}
\int\mu_j(\widetilde x )K_H(\widetilde x-x)) f_X(\widetilde x)d\widetilde x.
\end{align*}

Letting $v= H^{-1}(\widetilde x-x)$ by changing of variables, the above equations become
\begin{align*}
E\left(\frac{1}{n }\sum_{i=1}^nJ(y_i)K_H(x_i-x))\right)&=\int \mu(x+Hv )K(v) f_X(x+Hv)dv.
\end{align*}
By the multivariate Taylor expansion,
\begin{align}
\label{eq-taylorf}
f_X(x+Hv)=f_X(x)+(\bigtriangledown f)\cdot (Hv)+R,
\end{align}
where  $\bigtriangledown f$ is the gradient of $f$ and $R$ is the remainder term  of the expansion. The remainder  $R$ can be shown to be bounded above by
\begin{align*}
R\leq \frac{C}{2} \| Hv\|^2, \; \| Hv\|=|h_1v_1|+\ldots |h_mv_m|.
\end{align*}
Note that $\mu(x+Hv ) $ is a multivariate map valued  in $\mathbb R^D$.  We can make  second order multivariate Taylor expansions for $\mu(x+Hv )=(\mu_1(x+Hv),\ldots, \mu_D(x+Hv))' $ at each of its entries $\mu_i$ for $i=1,\ldots, D$. We have
\begin{align}
\label{eq-taylormu}
\mu(x+Hv ) =\mu(x)+A(Hv)+V+R,
\end{align}
where $A$ is a $D\times m$ matrix whose $i$th row is given by the gradient of $\mu_i$ evaluated at $x$. $V$ is a $D$-dimensional vector, whose $i$th term is given by $\frac{1}{2}(Hv)^tT_i(Hv)$, where $T_i$ is the Hessian matrix of $\mu_i(x)$ and  $R$ is the remainder vector.
Thus,
\begin{align}
&E\left(\frac{1}{n }\sum_{i=1}^nJ(y_i)K_H(x_i-x))\right)\\  \nonumber
&\approx \int \left(( f_X(x)+(\bigtriangledown f)\cdot (Hv)) K(v)(\mu(x)+A(Hv)+V)\right)dv\\ 
 &=f_X(x)\mu(x)+ f_X(x)\int K(v)A(Hv)dv+f_X(x)\int K(v)Vdv\label{eq-2.10} \\
&+\mu(x)\int (\bigtriangledown f)\cdot (Hv) K(v)dv +\int (\bigtriangledown f)\cdot (Hv) K(v) A(Hv)dv+\int (\bigtriangledown f)\cdot (Hv) K(v)Vdv \label{eq-2.11}. \\ \nonumber
\end{align}
By the property of the kernel function, we have  $\int K(u)udu=0$;  therefore the  second term of equation \eqref{eq-2.10} is zero by simple algebra.
To evaluate the third term of equation \eqref{eq-2.10},  we first calculate for $\int K(v)Vdv$. From here onward until the end of the proof, we denote $x=(x^1,\ldots, x^m)$ where $x^i$ is the $i$th coordinate of $x$.  Note that the $i$th term of $V$ ($i=1,\ldots, D$) is given by  $\dfrac{1}{2}(Hv)^tT_i(Hv)$, where $T_i$ is the Hessian matrix of $\mu_i$, which is precisely
\begin{align*}
\dfrac{1}{2}h_1^2v_1^2 \left(\dfrac{\partial^2 \mu_i}{\partial (x^1)^2}+\ldots+ \dfrac{\partial^2 \mu_i}{\partial x^mx^1} \right)+\ldots+\frac{1}{2}h_m^2v_m^2\left(  \dfrac{\partial^2 \mu_i}{\partial x^1x^m}+\ldots+ \dfrac{\partial^2 \mu_i}{\partial (x^m)^2}\right).
\end{align*}
Therefore, the $i$th entry of the third term of  equation \eqref{eq-2.10} is given by
\begin{align}
\label{eq-U}
U_i&=\dfrac{1}{2}f_X(x) \Big(h_1^2 \left(\dfrac{\partial^2 \mu_i}{\partial (x^1)^2}+\ldots+ \dfrac{\partial^2 \mu_i}{\partial x^mx^1} \right)\int v_1^2K_1(v_1)dv_1+\ldots\\ \nonumber
&+h_m^2\left(  \dfrac{\partial^2 \mu_i}{\partial x^1x^m}+\ldots+ \dfrac{\partial^2 \mu_i}{\partial (x^m)^2}\right)\int v_m^2K_m(v_m)dv_m\Big).
\end{align}
The first term of  equation \eqref{eq-2.11} is given by
\begin{align*}
\mu(x)\int (\bigtriangledown f)\cdot (Hv) K(v)dv=\int\left(h_1v_1\dfrac{\partial f}{\partial x^1}+\ldots+h_mv_m\dfrac{\partial f}{\partial x^m}\right)K(v)dv=0.
\end{align*}
The $i$th entry of the second term of equation \eqref{eq-2.11} is given by
\begin{align}
\label{eq-Z}
h_1^2\dfrac{\partial f}{\partial x^1}\dfrac{\partial \mu_i}{\partial x^1}\int v_1^2K_1(v_1)dv_1+\ldots+h_m^2\dfrac{\partial f}{\partial x^m}\dfrac{\partial \mu_i}{\partial x^m}\int v_m^2K_m(v_m)dv_m.
\end{align}

The third term of  equation \eqref{eq-2.11} can be shown to be zero, since odd moments of
symmetric kernels are 0.
Therefore, we have
\begin{align}
\label{eq-bias0}
E\left(\frac{1}{n }\sum_{i=1}^nJ(y_i)K_H(x_i-x))\right)&\approx f_X(x)\mu(x)+Z,
\end{align}
where  the $i$th coordinate of $Z$ is
\begin{align}
\label{eq-bias}
Z_i=&\nonumber h_1^2\left\{ \dfrac{\partial f}{\partial x^1}\dfrac{\partial \mu_i}{\partial x^1}+\dfrac{1}{2}f_X(x) \left(\dfrac{\partial^2 \mu_i}{\partial (x^1)^2}+\ldots+ \dfrac{\partial^2 \mu_i}{\partial x^mx^1} \right) \right\}\int v_1^2K_1(v_1)dv_1\\ \nonumber
&+\ldots\\
&+ h_m^2\left\{ \dfrac{\partial f}{\partial x^m}\dfrac{\partial \mu_i}{\partial x^m}+   \dfrac{1}{2}f_X(x)\left( \dfrac{\partial^2 \mu_i}{\partial x^1x^m}+\ldots+ \dfrac{\partial^2 \mu_i}{\partial (x^m)^2}\right)  \right\}\int v_m^2K_m(v_m)dv_m
\end{align}
combining equations \eqref{eq-U} and  \eqref{eq-Z}. The reminder term of \eqref{eq-taylorf} is of order $o(\max \{h_1,\ldots, h_m \})$  and each entry of the remainder vector in \eqref{eq-taylormu} is of order $o(\max \{h_1^2,\ldots, h_m^2 \})$.

We now look at the covariance matrix of $n^{-1}\sum_{i=1}^nJ(y_i)K_H(x_i-x))$,  which we denote by $\Sigma(x)$.  Denote the $j$th entry ($j=1,\ldots, D$) of $J(y_i)$  as $J_j(y_i)$.  
Denote $\sigma(y^j, y^k)$ as the conditional covariance between the $i$th entry  and $j$th entry of $y$. We have
\begin{align*}
\Sigma_{jk}=E\Big[  & \left(\frac{1}{n }\sum_{i=1}^nJ_j(y_i)K_H(x_i-x))-E\left(\frac{1}{n }\sum_{i=1}^nJ_j(y_i)K_H(x_i-x))\right)  \right)\\
&\left(\frac{1}{n }\sum_{i=1}^nJ_k(y_i)K_H(x_i-x))-E\left(\frac{1}{n }\sum_{i=1}^nJ_k(y_i)K_H(x_i-x)) \right)  \right)\Big]\\
=E\Big[  & \left(\frac{1}{n }\sum_{i=1}^n\left(J_j(y_i)K_H(x_i-x))-\int \mu_j(\widetilde{x})K_H(\widetilde x-x)f_X(\widetilde x)d\widetilde x\right)\right)\\
&\left(\frac{1}{n }\sum_{i=1}^n\left(J_k(y_i)K_H(x_i-x))-\int \mu_k(\widetilde{x})K_H(\widetilde x-x)f_X(\widetilde x)d\widetilde x  \right)\right)\Big]\\
=\frac{1}{n}\int E\Big[  & \left(J_j(y_1)K_H(x_1-x))-\int \mu_j(\widetilde{x})K_H(\widetilde x-x)f_X(\widetilde x)d\widetilde x\right)\\
&\left(J_k(y_1)K_H(x_1-x))-\int \mu_k(\widetilde{x})K_H(\widetilde x-x)f_X(\widetilde x)d\widetilde x  \right)\mid x_1\Big]f_X( x_1)dx_1\\
&=\frac{1}{n}\int \sigma(J_j(y_1)K_H(x_1-x)), J_k(y_1)K_H(x_1-x))f_X(x_1)dx_1\\
&=\frac{1}{n}\int K_H(x_1-x))^2\sigma(J_j(y_1), J_k(y_1))f_X(x_1)dx_1.
\end{align*}
By the change of  variable $v=H^{-1}(x_1-x)$, the above equation becomes
\begin{align}
\label{eq-cov}
\Sigma_{jk}&\nonumber=\frac{1}{n| H|}\int K(v)^2\sigma(J_j(y_{v}), J_k(y_{v}))f_X(Hv+x)dv\\ \nonumber
&=\frac{1}{n | H|}\int K(v)^2\sigma(J_j(y_{v}), J_k(y_{v}))\left( f_X(x)+\bigtriangledown f\cdot (Hv)+o(\max\{ h_1,\ldots, h_m \})\right)dv\\
&=\frac{1}{n| H|}\int K(v)^2\sigma(J_j(y_{v}), J_k(y_{v}))f_X(x))dv+o\left(\frac{1}{n|H| }\right) .
\end{align}
By \eqref{eq-kerD}, \eqref{eq-bias0} and  \eqref{eq-cov}, and applying central limit theorem and Slustky's theorem,  one has
\begin{align}
\label{eq-cltnoproj}
\sqrt{n|H|} \left (\widehat{F}(x)-\widetilde{\mu}(x)\right) \xrightarrow{L} N(0,\bar \Sigma(x)),
\end{align}
where $\widetilde{\mu}(x)=\mu(x)+\frac{Z}{f_X(x)} $ and the $i$th entry ($i=1,\ldots, D$) of $Z$ is given by \eqref{eq-bias} and
\begin{align}
\label{eq-sigbar}
\bar \Sigma_{jk}=\dfrac{\sigma(J_j(y_{v}), J_k(y_{v}))\int K(v)^2dv}{f_X(x)}.
\end{align}
One can show
\begin{align*}
\sqrt{n|H|}\left(\widehat{F}_E(x)-\mathcal P \left(\widetilde{\mu}(x)\right)\right)=\sqrt{n|H|}d_{\widetilde\mu(x)}\mathcal P\left (\widehat{F}(x)- \widetilde\mu(x)\right)+o_P(1).
\end{align*}
Therefore, one has
\begin{align}
\sqrt{n|H|}d_{\widetilde\mu(x)}\mathcal P\left (\widehat{F}(x)- \widetilde{\mu}(x)\right) \xrightarrow{L} N(0,\widetilde \Sigma(x)).
\end{align}
Here $\widetilde\Sigma(x)=B^T\bar \Sigma(x) B$,  where $B$ is the $D\times d$ matrix of the differential $d_{\widetilde{\mu}(x)}\mathcal P$ with respect to given orthonormal bases of $T_{\widetilde{\mu}(x)}\R^D$ and $T_{\mathcal P\widetilde\mu(x)}\widetilde{M}$ .

\end{proof}

\begin{proof}[Proof of Corollary \ref{coro-mise}]

In choosing the optimal order of bandwidth, one can consider choosing $(h_1,\ldots, h_m)$ such that the  mean integrated squared error is minimized. Note that
\begin{align}
\widehat{F}_E(x)-F(x) =\text{Jacob}(\mathcal{P})_{\mu(x)}\left(\widehat{F}(x)-\mu(x)\right)+o_p(1).
\end{align}
Here $\text{Jacob}(\mathcal P)$ is the Jacobian matrix of the projection map $\mathcal P$. 
One has
\begin{align*}
MISE (\widehat{F}_E(x))&=\int E \|\widehat{F}_E(x)-F(x)   \|^2dx\\
&=\int E \|\text{Jacob}(\mathcal{P})_{\mu(x)}\left(\widehat{F}(x)-\mu(x)\right)+o_p(1)  \|^2dx\\
&=\int E\left(\sum_{i=1}^D\left(\sum_{j=1}^D \mathcal P_{ij}\left(\widehat F_j(x)-\mu_j(x) \right) \right)^2+o_p(1)\right)dx\\
&=O(1/n|H|)+\ldots+O(1/n|H|)+O(h_1^4)+\ldots+O(h_m^4).
\end{align*}
The last terms follow from Fatou's lemma, and that the Jacobian map is differentiable at $\mu(x)$ for every $x$. Therefore, if $h_i$'s ($i=1,\ldots, m$) are taken to be of the same order, that is, of $O(n^{-1/(m+4)})$, then one can obtain  MISE($\widehat{F}_E(x)$) with an order of $O(n^{-4/(m+4)}).$
\end{proof}


\begin{proof}[Proof of Theorem \ref{th-uniform}]
Let  $B$ be the $D\times d$ matrix of the differential $d_{\widetilde{\mu}(x)}\mathcal P$ with respect to given orthonormal basis of tangent space $T_{\widetilde{\mu}(x)}\R^D$ and tangent space $T_{\mathcal P\widetilde{\mu}(x)}\widetilde M$. Given a canonical choice of basis for tangent space $T_{\widetilde{\mu}(x)}\R^D$, one has 
the representation for 
\begin{align}
\sup_x\|d_{\widetilde\mu(x)}\mathcal{P} \left ( \widehat{F}(x)-E(\widehat{F}(x))\right)\|=\sup_x\sqrt{\sum_{i=1}^d \left(\sum_{j=1}^D B^T_{ij}\left (\widehat{F}_j(x)-E(\widehat{F}_j(x))\right)\right)^2}.
\end{align}
Note that the projection map is differentiable around the neighborhood of $\mu(x)$ and $\mathcal X$ is compact, so  $B^T_{ij}(x)$ are bounded. 
Let $C_{ij}=\sup_{x\in \mathcal X} (B^T_{ij})^2(x)$ and $C=\max{C_{ij}}$. 
For each term note that, by Cauchy-Schwarz inequality, 
\begin{align}
\sup_{x\in\mathcal X}\left(\sum_{j=1}^D\left(B^T_{ij}\left(\widehat F_j(x)-E\left(\widehat F_j(x)\right) \right) \right)\right)^2&\leq \sup_x \sum_{j=1}^D(B^T_{ij})^2\left(\widehat F_j(x)-E\left(\widehat F_j(x)\right) \right)^2 \\
&\leq C\sum_{j=1}^D\sup_{x\in \mathcal X}\left(\widehat F_j(x)-E\left(\widehat F_j(x)\right) \right)^2.
\end{align}
By   Theorem 2   in \cite{uniformrates},  one can see that
\begin{align}
\sup_{x\in \mathcal X}|\left(\widehat F_j(x)-E\left(\widehat F_j(x)\right) \right)| =O(r_n),
\end{align}
where $r_n=\log^{1/2} n/\sqrt{n|H|}$.  Then one has
\begin{align}
\sup_{x\in\mathcal X}\sum_{i=1}^d\left(\sum_{j=1}^D\left(B_{ij}\left(\widehat F_j(x)-E\left(\widehat F_j(x)\right) \right) \right)\right)^2=O(r_n^2).
\end{align}
Then one has
\begin{align*}
\sup_x\|d_{\widetilde\mu(x)}\mathcal{P} \left ( \widehat{F}(x)-E(\widehat{F}(x))\right)\|&=\sup_{x\in\mathcal X}\sqrt{\sum_{i=1}^d\left(\sum_{j=1}^D\left(B_{ij}\left(\widehat F_j(x)-E\left(\widehat F_j(x)\right) \right) \right)\right)^2}\\
&=O(r_n)=O\left(\log^{1/2} n/\sqrt{n|H|}\right).
\end{align*}

\end{proof}

\begin{proof}[Proof of Theorem \ref{th-poly}]

Given the higher order smoothness assumption on $\mu(x)$, one can make higher order approximations and using a local polynomials regression estimate would result in the reduction of bias term in estimating $\mu(x)$.   The asymptotic distribution for multivariate local regression estimator for Euclidean responses has been derived \citep{guetal, ruppert1994, masryJTSA571},   and we leverage on some of their results in our proof.

Note that $\widehat{F}(x)=\left( \widehat{F}_1(x),\ldots, \widehat{F}_D(x) \right)\in \R^D$. $E(\widehat{F}(x))=\left(E(\widehat{F}_1(x)),\ldots, E(\widehat{F}_D(x)) \right)^T$ and the expectation taken in each component is with respect to the marginal distribution of $\widetilde{P}(dy|x)$. Then by Theorem 1 of \cite{guetal}, the following holds:
\begin{itemize}
\item[(1)] If $p$ is odd, then for $j=1,\ldots, D$
\begin{align}
\label{eq-polybias1}
\Bias_j(\widehat{F}(x))&\nonumber=E(\widehat{F}_j(x)  )-\mu_j(x)\\
&=\left( \mathcal M_p^{-1}\mathcal B_{p+1}\boldsymbol H^{(p+1)}\boldsymbol m^j_{\boldsymbol{p+1}}(x)\right)_1,
\end{align}
which is of order $O(\|\boldsymbol h\|^{p+1})$.
Here  $(\cdot)_1$ represents the first entry of the vector inside the parenthesis;
\item[(2)] If $p$ is even, then  for $j=1,\ldots, D$
\begin{align}
\label{eq-polybias2}
&\Bias_j(\widehat{F}(x))=E(\widehat{F}_j(x)  )-\mu_j(x)\\ \nonumber
&=\left( \sum_{l=1}^mh_l\dfrac{f_l(x)}{f_X(x)} \left(\mathcal M_p^{-1}\mathcal B^l_{p+1}-\mathcal M_p^{-1}\mathcal M_p^{l}\mathcal M_p^{-1}\mathcal B_{p+1}\right)\boldsymbol H^{(p+1)}\boldsymbol m^j_{\boldsymbol{p+1}}(x)+\mathcal M_p^{-1}\mathcal B_{p+2}\boldsymbol H^{(p+2)} \boldsymbol m^j_{\boldsymbol{p+2}}(x)\right)_1,
\end{align}
which is of order $O(\|\boldsymbol h\|^{p+2})$.
\end{itemize}
For any $k\in\{0,1,\ldots, p\}$. Let $N_k=\binom{k+m-1}{m-1}$  and $\mathcal{N}_{p}=\sum_{k=0}^pN_k$. Here $\mathcal M_p$ is a $\mathcal{N}_{p}\times \mathcal{N}_{p}$ matrix whose  $(i,j)$th block ($0\leq i,j\leq p$) is given by $\int_{\R^m} \boldsymbol u^{i+j}K(\boldsymbol u)d\boldsymbol u$ and $\mathcal M^l_p$ ($l=1,\ldots, m$) is a $\mathcal{N}_{p}\times \mathcal{N}_{p}$ matrix whose  $(i,j)$th block ($0\leq i,j\leq p$) is given by $\int_{\R^m}u_l \boldsymbol u^{i+j}K(\boldsymbol u)d\boldsymbol u$. $\mathcal B_{p+1}$ is a $\mathcal{N}_{p}\times N_{p+1}$ matrix whose $(i, p+1)$th ($i=1,\ldots, p$) block is given by $\int_{\R^m} \boldsymbol u^{i+p+1}K(\boldsymbol u)d\boldsymbol u$ and $\mathcal B_{p+1}^l$ ($l=1,\ldots, m$) is a $\mathcal{N}_{p}\times N_{p+1}$ matrix whose $(i, p+1)$th ($i=1,\ldots, p$) block is given by $\int_{\R^m} u_l\boldsymbol u^{i+p+1}K(\boldsymbol u)d\boldsymbol u$. We have $\boldsymbol H^{(p+1)}=\diag\{h_1^{p+1},\ldots, h_m^{p+1}\}$. $f_l(x)=\dfrac{\partial f_X(x)}{\partial x^{l}}$ and $\boldsymbol m^j_{\boldsymbol{p+1}}(x)$ ($j=1,\ldots, D$) is the vector of all the $p+1$ order partial derivative of $\mu_j(x)$, that is, $m^j_{\boldsymbol{p+1}}(x)=\left(\dfrac{\partial \mu_j^{p+1}(x)}{\partial (x^1)^{p+1}}, \dfrac{\partial \mu_j^{p+1}(x)}{\partial (x^1)^{p}\partial (x^2)},\ldots,  \dfrac{\partial \mu_j^{p+1}(x)}{\partial (x^m)^{p+1}}  \right)$.

With $\Bias_j(\widehat{F}(x))$ ($j=1,\ldots,D$) given above, one has
\begin{align}
\label{eq-polybias}
\Bias(x)=E(\widehat{F}(x)))-\mu(x)=\left(\Bias_1(\widehat{F}(x)),\ldots, \Bias_D(\widehat{F}(x)) \right)^T.
\end{align}

Although higher order polynomial regression results in the reduction in the order of bias with the higher order smoothness assumptions on $\mu(x)$, the order and expression of the covariance remains the same. That is,
\begin{align}
\label{eq-cov}
\Sigma_{jk}\nonumber&=\cov(\widehat{F}_j(x), \widehat{F}_k(x))\\
&=\frac{1}{n| H|}f_X(x)^{-1}\int K(v)^2\sigma(J_j(y_{v}), J_k(y_{v}))dv+o\left(\frac{1}{n|H| }\right),
\end{align}
where $\sigma(J_j(y_{v}), J_k(y_{v})$ is the covariance between $J_j(y_{v})$ and $J_k(y_{v})$.

Applying the central limit theorem, one has
\begin{align}
\label{eq-cltnoprojpoly}
\sqrt{n|H|} \left (\widehat{F}(x)-\mu(x)-\Bias(x)\right) \xrightarrow{L} N(0,\bar \Sigma(x))
\end{align}
where the $j$th ($j=1,\ldots, D$) entry of $\Bias(x)$ is given in \eqref{eq-polybias1} or \eqref{eq-polybias2}  depending on $p$ is odd or even, and
\begin{align}
\label{eq-sigbar2}
\bar \Sigma_{jk}=\dfrac{\sigma(J_j(y_{v}), J_k(y_{v}))\int K(v)^2dv}{f_X(x)}.
\end{align}
Letting $\widetilde{\mu}(x)=\mu(x)+\Bias(x)$, one has
\begin{align*}
\sqrt{n|H|}\left(\widehat{F}_E(x)-\mathcal P \left(\widetilde{\mu}(x)\right)\right)=\sqrt{n|H|}d_{\widetilde{\mu}(x)}\mathcal P\left (\widehat{F}(x)- \widetilde{\mu}(x)\right)+o_P(1).
\end{align*}
Therefore by applying Slutsky's theorem,  one has
\begin{align}
\sqrt{n|H|}d_{\widetilde{\mu}(x)}\mathcal P\left (\widehat{F}(x)- \widetilde{\mu}(x)\right) \xrightarrow{L} N(0,\widetilde \Sigma(x)).
\end{align}
Here $\widetilde\Sigma(x)=B^T\bar \Sigma(x) B$ where $B$ is the $D\times d$ matrix of the differential $d_{\widetilde{\mu}(x)}P$ with respect to given orthonormal bases of the tangent space $T_{\widetilde{\mu}(x)}\R^D$ and tangent space $T_{\widetilde{\mu}(x)}\widetilde{M}$.

\end{proof}

\bibliographystyle{apalike}
\bibliography{reference1-1.bib}

\end{document}